\newtheorem{theorem}{Theorem}[section]
\newtheorem{lemma}[theorem]{Lemma}
\newtheorem{proposition}[theorem]{Proposition}
\theoremstyle{definition}
\newtheorem{definition}[theorem]{Definition}
\theoremstyle{remark}
\newtheorem{remark}[theorem]{Remark}
\numberwithin{equation}{section}
\begin{document}
\setlength{\baselineskip}{1.2\baselineskip}
\title  [SPACETIME CONVEXITY]
{ON THE microscopic spacetime convexity principle For FULLY NONLINEAR PARABOLIC EQUATIONS I: SPACETIME CONVEX SOLUTIONS}
\author{Chuanqiang Chen}
\address{Wu Wen-Tsun Key Laboratory of Mathematics\\
         University of Science and Technology of China\\
         Hefei, 230026, Anhui Province, CHINA}
\email{cqchen@mail.ustc.edu.cn}

\thanks{2000 Mathematics Subject Classification: 35K10, 35B99.}
\thanks{Keywords: spacetime convexity; microscopic convexity principle; constant rank theorem.}
\thanks{Research of the author was supported by Chinese Universities Scientific Fund and Wu
Wen-Tsun Key Laboratory of Mathematics in USTC}
\maketitle

\begin{abstract}
Spacetime convexity is a basic geometric property of the solutions
of parabolic equations. In this paper, we study microscopic  convexity properties of spacetime convex solutions of fully
nonlinear parabolic partial differential equations and give a new simple proof of the constant rank theorem in \cite{CH13}.
\end{abstract}

\section{Introduction}
Spacetime convexity is a basic geometric property of the solutions
of parabolic equations. In \cite{Bo82,Bo96,Bo00}, Borell used the
Brownian motion to study certain spacetime convexities of the
solutions of diffusion equations and the level sets of the solution to a heat equations with
Schr$\ddot{o}$dinger potential. Ishige-Salani introduced some notions of parabolic quasiconcavity in \cite{IS10, IS11} and
parabolic power concavity in \cite{IS14}, which are some kinds of spacetime convexity. In \cite{IS10, IS11, IS14}, they studied the corresponding parabolic
boundary value problems using the convex envelope method, which is a macroscopic method. At the same time, Hu-Ma \cite{HM} established a constant rank theorem for the space-time Hessian of space-time convex solutions to the heat equation, which is the microscopic method. Chen-Hu \cite{CH13} generalized the microscopic spacetime convexity principles to fully nonlinear parabolic equations. In this paper we give a new proof of the constant rank theorem in \cite{CH13}.

The convexity of solutions of partial differential equations has been largely
investigated by means of several different methods, which can be
grouped into two  general families: macroscopic and microscopic
methods.

For the macroscopic convexity argument, Korevaar made breakthroughs
in \cite{Kor83_1, Kor83_2}, where he introduced a concavity maximum
principle for a class of quasilinear elliptic equations. The results of
Korevaar were later improved by Kennington \cite{Kenn85} and  Kawohl \cite{Kaw86}.
The theory was further developed to its great generalization by
Alvarez-Lasry-Lions \cite{All97}. There are some related results on
the spacetime convexity of the solutions of parabolic equations in
\cite{Kenn88, PS} using the similar elliptic macroscopic convexity
technique in Kennington \cite{Kenn85} and Kawohl \cite{Kaw86}.

The key of the
study of microscopic convexity is a method called constant rank
theorem, which is a very useful tool to produce convex solutions in
geometric analysis. The constant rank theorem technique was introduced in
dimension 2 by Caffarelli-Friedman \cite{CF85} (see also the work of Singer-Wong-Yau-Yau \cite{SYY85} for a similar
approach). Later the result in \cite{CF85} was generalized to high dimensions
by Korevaar-Lewis \cite{KL87}. Recently the constant rank theorem
was generalized to fully nonlinear elliptic and parabolic equations
in \cite{CGM07, BG09, BG10}. For parabolic equations, the
constant rank theorems in \cite{CGM07, BG09} are only about the
spatial hessian of the solution. As geometric applications of
the constant rank theorem, the Christoffel-Minkowski problem and the
related prescribing Weingarten curvature problems were studied in
\cite{GM03,GLM06,GMZ06}. The preservation of convexity for the
general geometric flows of hypersurfaces was given in \cite{BG09}.
Soon after, a constant rank theorem for the level sets was
established in \cite{BGMX}, where the result is a microscopic
version of \cite{BLS} (also it was studied in \cite{Kor90}). The
existence of the $k$-convex hypersurfaces with prescribed mean
curvature was given in \cite{HMW09} recently. Related results are also in \cite{LMX10, MX08}.

In this paper, we consider the following fully nonlinear parabolic equation
\begin{equation}\label{1.4}
\frac{{\partial u}} {{\partial t }}=F(D^2 u,Du,u,x,t), \quad (x,t)
\in \Omega \times (0,T] ,
\end{equation}
where $ F=F(A,p,u,x,t) \in C^{2,1} (\mathcal{S}_+^n \times
\mathbb{R}^n \times \mathbb{R} \times \Omega \times [0,T])$, $D^2 u =(\frac{\partial^2 u}{\partial x_i \partial x_j})_{n \times n}$, $Du =(u_{x_1}, \cdots, u_{x_n})$, and $F$
is elliptic in the following sense
\begin{equation}\label{1.5}
(\frac{{\partial F}} {{\partial u_{ij} }})
> 0,  \quad \text{for all } (x,t) \in \Omega \times (0,T].
\end{equation}
where $\mathcal{S}_+^n$ denotes the set of all the semipositive
definite $n \times n$ matrices. We say equation \eqref{1.4} is
parabolic if $F$ is elliptic in the sense of \eqref{1.5}.

In \cite{BG09}, Bian-Guan consider the (spatial) Hessian of the (spatial) convex solutions of \eqref{1.4}
and get the following result.

\begin{theorem} \label{th1.1}
Suppose $ F(A,p,u,x,t) \in C^{2,1} (\mathcal{S}_+^n \times \mathbb{R}^n \times \mathbb{R}
\times \Omega \times [0,T))$. Assume $F$ satisfies condition \eqref{1.5} and
\begin{equation}\label{1.6}
F(A^{-1},p,u,x,t)  \text{ is locally convex in } (A,u,x) \text{ for each pair } (p,t).
\end{equation}
Let $u \in C^{2,1} (\Omega \times [0,T)) $ be a convex solution of
the equation
\begin{equation}
\frac{{\partial u}} {{\partial t}} = F(D^2 u,Du,u,x,t). \notag
\end{equation}
For each $t \in (0, T)$, let $l(t)$ be the minimal rank of $(D^2 u(x, t))$ in $\Omega$, then the rank of
$(D^2u(x, t))$ is constant $l(t)$ and $l(s)\leq l(t)$ for all $s \leq t < T$. For each $0 < t \leq T$, $x_0 \in \Omega$
there exist a neighborhood $\mathcal {U}$ of $x_0$ and $(n-l(t))$ fixed directions $V_1, \cdots, V_{n-l (t)}$ such that
$D^2u(x, t)V_j = 0$ for all $1 \leq j \leq n - l(t)$ and $x \in \mathcal {U}$. Furthermore, for any $t_0 \in [0,T )$,
there is a $\delta > 0$, such that the null space of $(D^2u(x, t))$ is parallel in $(x, t)$ for all
$x \in \Omega$, $t \in (t_0, t_0 +\delta)$.
\end{theorem}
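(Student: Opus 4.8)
The plan is to run the microscopic constant rank argument of Caffarelli--Guan--Ma and Bian--Guan in the parabolic setting: construct an auxiliary function out of the elementary symmetric functions of $D^2u$ that vanishes exactly where the rank drops, prove that it satisfies a one-sided parabolic differential inequality, and conclude by the strong maximum principle. First I would fix $t_0\in(0,T)$, write $W=D^2u$ and $\ell=l(t_0)=\min_{x\in\Omega}\operatorname{rank}W(x,t_0)$, choose $x_0\in\Omega$ with $\operatorname{rank}W(x_0,t_0)=\ell$, and set $z_0=(x_0,t_0)$. Since $u$ is convex we have $W\ge 0$, so $\sigma_{\ell+1}(W)\ge 0$ and $\sigma_{\ell+2}(W)\ge 0$ throughout $\Omega\times(0,T)$, each vanishing at a point exactly when the rank there is $\le\ell$. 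Hence it suffices to prove $\operatorname{rank}W\le\ell$ on a full parabolic neighbourhood $B_r(x_0)\times(t_0-\varepsilon,t_0]$ of $z_0$: the set $\{x:\operatorname{rank}W(x,t_0)=\ell\}$ is closed by lower semicontinuity of the rank and, granting this local statement, open, hence equals the connected set $\Omega$; propagating the conclusion to $t<t_0$ gives $l(s)\le l(t_0)$ for $s$ near $t_0$, and then $l(s)\le l(t)$ for all $s\le t<T$ by continuation; the fixed null directions and the $(x,t)$-parallelism will follow from the same local information (see the last paragraph).

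For the local statement I would work near $z_0$ after an orthogonal rotation of the $x$-variables making $W(z_0)=\operatorname{diag}(\lambda_1,\dots,\lambda_n)$ with $\lambda_1\ge\cdots\ge\lambda_\ell>0=\lambda_{\ell+1}=\cdots=\lambda_n$, and split the indices into $G=\{1,\dots,\ell\}$ and $B=\{\ell+1,\dots,n\}$, so that near $z_0$ the ``good block'' $W_G=(W_{ij})_{i,j\in G}$ is invertible. On the open set where $\operatorname{rank}W\ge\ell+1$ I would consider
\[
\phi \;=\; \sigma_{\ell+1}(W)\;+\;\frac{\sigma_{\ell+2}(W)}{\sigma_{\ell+1}(W)},
\]
extended by $0$ on $\{\operatorname{rank}W\le\ell\}$; near $z_0$ this is continuous and (as one checks) $C^{1,1}$, is nonnegative, vanishes exactly on $\{\operatorname{rank}W\le\ell\}$, and $\phi(z_0)=0$ is a global minimum of $\phi$ on $\Omega\times(0,t_0]$. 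The heart of the matter is then the parabolic differential inequality
\[
\frac{\partial\phi}{\partial t}-F^{ij}\frac{\partial^2\phi}{\partial x_i\partial x_j}\;\le\;C\big(\phi+|D\phi|\big)\qquad\text{near }z_0,
\]
in the a.e./viscosity sense, where $F^{ij}=\partial F/\partial u_{ij}$ is positive definite by \eqref{1.5} and $C$ depends only on the $C^{2,1}$ norms of $F$ and $u$. Granting it, absorbing $C|D\phi|$ into a bounded first-order term and applying the parabolic strong maximum principle to $\phi\ge 0$ with the interior zero at $z_0$ forces $\phi\equiv 0$ on $B_r(x_0)\times(t_0-\varepsilon,t_0]$, which is exactly the local statement.

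To prove the inequality I would differentiate \eqref{1.4} twice in the spatial variables, getting $\partial_tW_{pq}=F^{ij}\partial_{ij}W_{pq}+(\text{lower})$, where the coefficients of the third derivatives $D^3u$ and of $D^2u$ are expressed through the first and second derivatives of $F$ in its arguments $(A,p,u,x,t)$, and then substitute this into $\partial_t\phi-F^{ij}\partial_{ij}\phi$. The terms carrying only first derivatives of $F$ (including $\partial_tF$, $F_p$, $F_u$, $F_x$) are of lower order and contribute $O(\phi+|D\phi|)$. The terms carrying second derivatives of $F$ — the Hessian of $F(\cdot,p,u,x)$ in $(A,u,x)$ contracted against $D^3u$ and $D^2u$ — are where hypothesis \eqref{1.6} enters: local convexity of $(A,u,x)\mapsto F(A^{-1},p,u,x)$ is precisely the condition under which the relevant quadratic form, which naturally involves $W_G^{-1}$, has the favourable sign. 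Finally, the genuine third-order terms quadratic in the ``bad'' components $u_{\alpha\beta\gamma}$ with $\alpha,\beta\in B$ appear with an unfavourable sign coming from $\sigma_{\ell+1}$, but the contribution of $\sigma_{\ell+2}/\sigma_{\ell+1}$ produces matching terms with the opposite sign; the precise choice of $\phi$ is made so that these cancel modulo quantities bounded by $\phi+|D\phi|$. Collecting everything yields the inequality.

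It remains to read the last assertions off from $\phi\equiv 0$ near $z_0$: on $B_r(x_0)\times(t_0-\varepsilon,t_0]$ we then have $\operatorname{rank}W\equiv\ell$, its kernel is an $(n-\ell)$-dimensional smooth distribution, and differentiating $W(x,t)V=0$ together with the vanishing of the bad block of $W$ and of its first spatial derivatives shows the kernel is spanned by fixed directions $V_1,\dots,V_{n-l(t)}$ on a neighbourhood $\mathcal U$ of $x_0$; inserting this into the evolution equation for $W$ gives the parallelism of the null space in $(x,t)$ on $\Omega\times(t_0,t_0+\delta)$ for some $\delta>0$. The main obstacle is the middle step: the term-by-term sign bookkeeping for $\partial_t\phi-F^{ij}\partial_{ij}\phi$, in particular verifying that \eqref{1.6} is exactly what disposes of the second-order-in-$F$ terms while the algebra relating $\sigma_{\ell+1}$ and $\sigma_{\ell+2}$ cancels the bad third-order terms; a leaner choice of test function, or a cleaner organization of that cancellation, is presumably what makes the new proof ``simple''.
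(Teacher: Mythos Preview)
Your outline is essentially the Bian--Guan argument from \cite{BG09}, and that is exactly how the present paper treats Theorem~\ref{th1.1}: it is quoted as a known result, and Section~4.1 recapitulates precisely the test function $\phi=\sigma_{\ell+1}(D^2u)+\sigma_{\ell+2}(D^2u)/\sigma_{\ell+1}(D^2u)$, the parabolic differential inequality (in the paper's form \eqref{4.3}, with the extra term $-C_2\sum_{i,j\in B}|\nabla u_{ij}|$ that feeds directly into the null-space parallelism), and the strong maximum principle. So your approach matches the paper's.

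Two corrections, however. First, the differential inequality you wrote has the wrong sign: what is actually proved (and what the paper records as \eqref{4.3}) is
\[
\sum_{i,j} F^{ij}\phi_{ij}-\phi_t \;\le\; C\big(\phi+|\nabla\phi|\big),
\]
not $\phi_t-F^{ij}\phi_{ij}\le C(\phi+|D\phi|)$. Only the former direction lets the strong maximum principle propagate the interior zero of the nonnegative function $\phi$ backward in time; your inequality would make $\phi$ a subsolution, which controls maxima, not minima. Second, your closing remark misreads the paper: the ``new simple proof'' it advertises is for Theorem~\ref{th1.3} (the \emph{spacetime} constant rank theorem), not for Theorem~\ref{th1.1}. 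The simplification there consists in rotating the full spacetime coordinates to diagonalize $D_{x,t}^2u$ and invoking Theorem~\ref{th1.1} itself to kill all but one bad direction, so that the bare $\sigma_{\ell+1}$ suffices as test function; there is no new argument for Theorem~\ref{th1.1}.
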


Naturally, one can consider the spacetime Hessian of the spacetime convex solutions
of \eqref{1.4} and establish the corresponding constant rank theorem. Hu-Ma \cite{HM} established the
spacetime constant rank theorem for the heat equation in $\mathbb{R}^n $, and Chen-Hu \cite{CH13} extended
it to fully nonlinear parabolic equations with the "inverse convex" structural condition. But the proof of the spacetime constant rank
theorem is very complicate due to the choice of spacetime coordinates. As before, we can always choose a suitable (spatial) coordinate system such that
the matrix (e.g. the Hessian matrix or the second fundamental form of the level sets) is diagonal at each fixed point, which can simplify
the calculations. For the spacetime Hessian matrix and the fundamental forms of the spacetime level sets, we cannot diagonalize them by
choosing the spatial coordinates, and the parabolic equation may change the form if we rotate the spacetime coordinates. In \cite{HM, CH13, CMS},
the calculations are based on a good choice of spatial coordinates and the fixed equation form, but it is very hard. In fact, the difficulty is
essential.

First, we give the definition of the spacetime convexity of a function $u(x,t)$.
\begin{definition}
Suppose $u \in C^{2} (\Omega \times (0,T] )$, where $ \Omega$ is a
domain in $\mathbb{R}^n $, then $u$ is spacetime convex if $u$ is
 convex for every $(x,t) \in \Omega \times (0,T] $, i.e
$$
D_{x,t}^2 u = \left( {\begin{matrix}
   {D^2 u} & {(Du_t )^T }  \\
   {Du_t } & {u_{tt} }  \\
 \end{matrix} } \right) \geq 0
$$
\end{definition}

In this paper and a successive paper, we will rotate the spacetime coordinates so that the spacetime matrix is diagonal at each fixed point as before,
but there are other difficulties in the calculations. We introduce some new techniques and inequalities to overcome the difficulties.
Our main result is the following constant rank theorem, which is also the main theorem of \cite{CH13}.
\begin{theorem}\label{th1.3}
Suppose $\Omega$ is a domain in $\mathbb{R}^n $, $ F=F(A,p,u,x,t)
\in C^{2,1} (\mathcal{S}_+^n \times \mathbb{R}^n \times \mathbb{R}
\times \Omega \times (0,T])$ and $u \in C^{2,1}(\Omega \times
(0,T])$ is a spacetime convex solution of \eqref{1.4}. If $F$
satisfies $(\ref{1.5})$ and the following condition
\begin{eqnarray}\label{1.7}
F(A^{-1},p,u,x,t)\quad \text{ is locally convex in } (A,u,x,t)
\text{ for each fixed } p \in \mathbb{R}^n,
\end{eqnarray}
then $D_{x,t}^2 u$ has constant rank in $\Omega$ for each
fixed $t \in (0,T] $. Moreover, let $l(t)$ be the minimal rank of
$D_{x,t}^2 u$ in $\Omega$, then $l(s) \leqslant l(t)$ for all
$0< s \leqslant t \leqslant T$.
\end{theorem}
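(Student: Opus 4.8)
The plan is to show, for each fixed $t_0\in(0,T]$, that the set
\[
Z_{t_0}=\{x\in\Omega:\operatorname{rank} D^2_{x,t}u(x,t_0)=l(t_0)\}
\]
is open; since $\operatorname{rank}$ is lower semicontinuous its complement in $\Omega$ is open as well by minimality of $l(t_0)$, so connectedness of $\Omega$ forces $Z_{t_0}=\Omega$, which is the constant rank assertion. The monotonicity $l(s)\le l(t)$ will then come for free from the \emph{backward-in-time} character of the strong maximum principle used to prove openness. Throughout, write $W=D^2_{x,t}u$, a symmetric positive semidefinite $(n+1)\times(n+1)$ matrix with spacetime indices $\alpha,\beta\in\{1,\dots,n,t\}$, and let $\mathcal L=\partial_t-F^{ij}\partial_i\partial_j$ with $F^{ij}=\partial F/\partial u_{ij}$; by \eqref{1.5} the operator $\mathcal L$ is parabolic, and by interior parabolic regularity the $C^{2,1}$ solution $u$ is smooth enough to differentiate the equation as needed.

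Fix $\bar x\in\Omega$ with $\operatorname{rank} W(\bar x,t_0)=l:=l(t_0)$. Following the strategy of Bian--Guan for the spatial case, I would work in a neighborhood of $(\bar x,t_0)$ with the test function
\[
\phi=\sigma_{l+1}(W)+\frac{\sigma_{l+2}(W)}{\sigma_{l+1}(W)},
\]
where $\sigma_k$ denotes the $k$-th elementary symmetric function of the eigenvalues; $\phi$ extends continuously across rank-$l$ points, is nonnegative, and vanishes exactly where $\operatorname{rank} W\le l$, the quotient term being inserted precisely to cancel the indefinite contribution of the second derivatives of $\sigma_{l+1}$. The genuinely new ingredient, as announced in the introduction, is that before any computation I would rotate the \emph{spacetime} coordinates so that $W(\bar x,t_0)$ is diagonal, and split the indices into the ``good'' block $G$ of the $l$ positive eigenvalues and the ``bad'' block $B$ of the $n+1-l$ zero ones. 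One must then carry the rotated form of equation \eqref{1.4} through the whole argument and check that \eqref{1.5} and the spacetime convexity hypothesis \eqref{1.7} remain exploitable after the rotation; since a spacetime rotation really does alter the parabolic structure, this bookkeeping is the delicate point.

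The core of the proof is to establish a differential inequality of the form
\[
\mathcal L\phi\le C\bigl(\phi+|D\phi|\bigr)
\]
in a backward parabolic neighborhood of $(\bar x,t_0)$, interpreted in the appropriate (viscosity) sense where $\phi$ fails to be $C^2$. To obtain it one differentiates the rotated equation once and twice, using $u_t=F$, $u_{it}=\partial_i F$, $u_{ijt}=\partial_i\partial_j F$ and $u_{tt}=dF/dt$ to trade the high-order time derivatives of $u$ for spatial ones, so as to compute $\mathcal L(W_{\alpha\beta})$ for every pair of spacetime indices, and then feeds these into the chain rule for $\mathcal L\sigma_{l+1}(W)$ and $\mathcal L\bigl(\sigma_{l+2}(W)/\sigma_{l+1}(W)\bigr)$. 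The terms cubic in the third derivatives of $u$ are controlled by hypothesis \eqref{1.7}: rewriting $F$ through $F(A^{-1},p,u,x,t)$ produces a term of the correct sign together with the inverse of the good block, exactly as in the elliptic and spatial-parabolic constant rank theorems, with the time direction now playing the role of an extra ``spatial'' variable, which is why \eqref{1.7} is the spacetime analogue of \eqref{1.6}. The remaining quadratic-gradient terms must be absorbed into $\phi+|D\phi|$ using elementary inequalities for $\sigma_{l+1}$ and $\sigma_{l+2}$ restricted to the bad block and to the mixed space-time entries $W_{it}$; producing the correct spacetime versions of these inequalities, which differ from the spatial ones because of the extra off-diagonal entries, is the second technical novelty and, together with the coordinate bookkeeping above, the step I expect to be the main obstacle.

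Finally, with the differential inequality in hand the strong minimum principle for parabolic operators applies: $\phi\ge 0$, $\phi(\bar x,t_0)=0$ and $\mathcal L\phi\le C(\phi+|D\phi|)$ force $\phi\equiv 0$ on a full spatial neighborhood of $\bar x$ at time $t_0$ and on a backward time interval. This yields openness of $Z_{t_0}$, hence the constant rank statement for each fixed $t$; propagating the vanishing backward in time (and iterating) gives $\operatorname{rank} D^2_{x,t}u(\cdot,s)\le l(t)$ for all $s\le t$, i.e.\ $l(s)\le l(t)$, which completes the proof.
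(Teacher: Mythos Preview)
Your outline captures the overall architecture---test function, differential inequality, strong maximum principle---but the paper's proof takes a substantially different and more economical route. Rather than attack the spacetime Hessian directly with the Bian--Guan quotient test function, the paper first invokes the already-known \emph{spatial} constant rank theorem (Theorem~\ref{th1.1}) as a black box. Lemma~\ref{lem2.4} splits the problem into two cases according to whether the time direction lies in the good or the bad block of $D^2_{x,t}u$. In CASE~1 (time direction good) the spatial Hessian has rank $l-1$, so the spatial result gives $\sigma_l(D^2u)\equiv 0$ in a neighborhood, and $\sigma_{l+1}(D^2_{x,t}u)\equiv 0$ follows by a one-line estimate. In CASE~2 (time direction bad) the spatial Hessian has rank $l$, and crucially the spatial theorem says its null space is \emph{parallel}: the bad spatial directions $e_{l+1},\dots,e_n$ can be fixed once and for all with $u_{ii}\equiv 0$ identically for $i\in B$. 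By Lemma~\ref{lem2.6} all third derivatives $u_{ij\,n+1}$ with $i$ or $j\in B$ then vanish exactly, so the only genuinely bad direction left is $y_{n+1}$, and the simple test function $\phi=\sigma_{l+1}(D^2_{x,t}u)$---with no quotient correction---already satisfies the desired differential inequality.

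What this buys is twofold. First, it sidesteps the regularization machinery needed to handle the quotient where $\sigma_{l+1}$ vanishes. Second, and more importantly, the terms that in your approach would have to be ``absorbed into $\phi+|D\phi|$'' simply vanish; the only nontrivial remaining step is the CLAIM \eqref{4.37}, comparing the good-block inverse in $y$-coordinates to that in $x$-coordinates so that the structural condition \eqref{1.7} can be applied. Your proposal, by contrast, would have to control $\sum_{\alpha,\beta\in B}|u_{\alpha\beta\gamma}|$ directly; note the paper's remark after Section~3 that this must be done via $|\nabla_x\phi|$, not the spacetime gradient, a distinction your write-up leaves ambiguous. Your route may well go through (the paper itself hints that it does), but the step you flag as ``the main obstacle'' is precisely the one the paper circumvents by leaning on the spatial result first.
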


As it is well known, one needs to choose a
suitable coordinate system in order to to simplify the calculations in
the proof of a constant rank theorem. In \cite{HM, CH13}, the proof is based on the coordinate system such that $D_x^2 u$ is diagonalized at each point. In this paper, we give another proof using a coordinate system such that $D_{x,t}^2 u$ is diagonalized at each point.

\begin{remark}
The same techniques can be used to obtain a Constant Rank Theorem of spacetime second fundamental form of space-time convex level sets.
\end{remark}

The rest of the paper is organized as follows. Section
2 contains some preliminaries. In Section 3, we prove a special case: heat equation, introduce the new ideas and new difficulties.
In Section 4, we give a new simple proof of the Constant Rank Theorem \ref{th1.3}.

\section{PRELIMINARIES}

In this section, we do some preliminaries.

\subsection{ elementary symmetric functions}

First, we recall the definition and some basic properties
of elementary symmetric functions, which could be found in
\cite{GM03,L96}.

\begin{definition}
For any $k = 1, 2,\cdots, n,$ we set
$$\sigma_k(\lambda) = \sum _{1 \le i_1 < i_2 <\cdots<i_k\leq n}\lambda_{i_1}\lambda_{i_2}\cdots\lambda_{i_k},
 \qquad \text {for any} \quad\lambda=(\lambda_1,\cdots,\lambda_n)\in {\Bbb R}^n.$$
 We also set $\sigma_0=1$ and $\sigma_k =0$ for $k>n$.
\end{definition}

We denote by $\sigma _k (\lambda \left| i \right.)$ the symmetric
function with $\lambda_i = 0$ and $\sigma _k (\lambda \left| ij
\right.)$ the symmetric function with $\lambda_i =\lambda_j = 0$.

The definition can be extended to symmetric matrices by letting
$\sigma_k(W) = \sigma_k(\lambda(W))$, where $ \lambda(W)= (\lambda
_1(W),\lambda _2 (W), \cdots ,\lambda _{n}(W))$ are the eigenvalues
of the symmetric matrix $W$. We also denote by $\sigma _k (W \left|
i \right.)$ the symmetric function with $W$ deleting the $i$-row and
$i$-column and $\sigma _k (W \left| ij \right.)$ the symmetric
function with $W$ deleting the $i,j$-rows and $i,j$-columns. Then
we have the following identities.

\begin{proposition}\label{prop2.2}
Suppose $W=(W_{ij})$ is diagonal, and $m$ is a positive integer,
then
\begin{align}\label{2.1}
\frac{{\partial \sigma _m (W)}} {{\partial W_{ij} }} = \begin{cases}
\sigma _{m - 1} (W\left| i \right.), &\text{if } i = j, \\
0, &\text{if } i \ne j.
\end{cases}
\end{align}
and
\begin{align}\label{2.2}
\frac{{\partial ^2 \sigma _m (W)}} {{\partial W_{ij} \partial W_{kl}
}} =\begin{cases}
\sigma _{m - 2} (W\left| {ik} \right.), &\text{if } i = j,k = l,i \ne k,\\
- \sigma _{m - 2} (W\left| {ik} \right.), &\text{if } i = l,j = k,i \ne j,\\
0, &\text{otherwise }.
\end{cases}
\end{align}
\end{proposition}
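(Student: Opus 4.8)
The plan is to reduce everything to differentiating a single determinant, using the classical identity
\[
\sigma_m(W)=\sum_{\abs{I}=m}\det(W_I),
\]
where the sum ranges over all $m$-element subsets $I\subseteq\{1,\dots,n\}$ and $W_I$ is the principal submatrix of $W$ on the rows and columns indexed by $I$. This holds for every $n\times n$ matrix, since the right-hand side is the coefficient of $s^{\,n-m}$ in $\det(sI+W)=\prod_{p}\bigl(s+\lambda_p(W)\bigr)$, hence equals $\sigma_m$ of the eigenvalues. Since differentiation with respect to the entries $W_{ij}$ commutes with this finite sum, it suffices to compute $\partial\det(W_I)/\partial W_{ij}$ and $\partial^2\det(W_I)/\partial W_{ij}\partial W_{kl}$, then set $W$ equal to the given diagonal matrix, regarding the $n^2$ entries $W_{ij}$ as independent variables.

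For \eqref{2.1}: a summand $\det(W_I)$ involves $W_{ij}$ only when $i,j\in I$, in which case $\partial\det(W_I)/\partial W_{ij}$ equals the $(i,j)$-cofactor of $W_I$. If $i=j$ this cofactor is $\det(W_{I\setminus\{i\}})$, and summing over all $I\ni i$ with $\abs{I}=m$ gives $\sum_{\abs{J}=m-1,\, i\notin J}\det(W_J)=\sigma_{m-1}(W\mid i)$. If $i\ne j$, then in the matrix obtained from $W_I$ by deleting row $i$ and column $j$ the row originally labelled $j$ is still present, yet its only possibly nonzero entry sat in the deleted column $j$; that row is therefore zero and the cofactor vanishes. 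This is exactly \eqref{2.1}.

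For \eqref{2.2} I differentiate once more, and the organizing observation is a ``shape'' principle for the Leibniz expansion $\det(W_I)=\sum_{\tau\in S_I}\operatorname{sgn}(\tau)\prod_{p\in I}W_{p\tau(p)}$: a monomial divisible by two prescribed entries comes from a permutation $\tau$ realizing those two slots, and after cancelling the two corresponding factors it survives evaluation at the diagonal matrix only if $\tau$ is the identity off the two prescribed slots — which forces $\tau$ to be either the identity (both slots diagonal) or a single transposition (the two slots being $(i,j)$ and $(j,i)$). Hence: if $i=j$, $k=l$, $i\ne k$, only the diagonal monomial $\prod_{p\in I}W_{pp}$ contributes, its mixed second derivative is $\det(W_{I\setminus\{i,k\}})$, and summation over $I\supseteq\{i,k\}$ yields $\sigma_{m-2}(W\mid ik)$; if $i=l$, $j=k$, $i\ne j$, only the monomial $-\,W_{ij}W_{ji}\prod_{p\in I\setminus\{i,j\}}W_{pp}$ coming from the transposition $(i\,j)$ contributes, with mixed second derivative $-\det(W_{I\setminus\{i,j\}})$, and summation yields $-\sigma_{m-2}(W\mid ij)$; in every remaining index pattern — in particular the purely second-order case $(i,j)=(k,l)$ with $i\ne j$, where $W_{ij}$ enters $\det(W_I)$ with degree at most one — no monomial survives and the derivative is $0$.

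The main obstacle is precisely the bookkeeping behind the ``shape'' principle: one must verify that in each of the mixed configurations, such as $i=j$ with $k\ne l$, or $i\ne j$ with $k\ne l$ but $(k,l)\ne(j,i)$, a surviving monomial would require a permutation $\tau$ that sends two distinct indices to a common value, which is impossible; this is what makes the ``otherwise'' case of \eqref{2.2} genuinely zero. Once this elementary combinatorial lemma is in hand, both cases of \eqref{2.1} and all three cases of \eqref{2.2} follow from the same term-by-term summation over principal minors, with no appeal to perturbation theory for eigenvalues (which would in any case require the eigenvalues of $W$ to be distinct).
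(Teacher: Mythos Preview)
The paper does not actually prove Proposition~\ref{prop2.2}; it is stated without proof as a standard identity, with a pointer to \cite{GM03,L96}. Your argument is correct and self-contained: the principal-minor expansion $\sigma_m(W)=\sum_{|I|=m}\det(W_I)$ reduces both \eqref{2.1} and \eqref{2.2} to cofactor and Leibniz-term computations, and your ``shape'' principle correctly isolates, at a diagonal point, the only surviving permutations (the identity for the $i=j$, $k=l$ case and the transposition $(i\,j)$ for the $i=l$, $j=k$ case). Your explicit remark that the $n^2$ entries $W_{ij}$ are treated as independent variables is exactly the convention implicit in \eqref{2.2}, where $\partial^2/\partial W_{ij}\partial W_{ji}$ is distinguished from $\partial^2/\partial W_{ij}^2$; without this the off-diagonal second-derivative formula would not make sense. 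Nothing is missing.
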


We need the following standard formulas of elementary symmetric
functions.
\begin{proposition}\label{prop2.3}
Let $\lambda=(\lambda_1,\dots,\lambda_n)\in\mathbb{R}^n$ and $k
=0, 1,...,n,$ then
\begin{align}
\label{e0.1}&\sigma_k(\lambda)=\sigma_k(\lambda|i)+\lambda_i\sigma_{k-1}(\lambda|i), \quad\forall \,1\leq i\leq n,&\\
\label{e0.2}&\sum_i \lambda_i\sigma_{k-1}(\lambda|i)=k\sigma_{k}(\lambda),&\\
\label{e0.3}&\sum_i\sigma_{k}(\lambda|i)=(n-k)\sigma_{k}(\lambda).&
\end{align}
\end{proposition}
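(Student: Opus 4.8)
\textbf{Proof proposal for Proposition \ref{prop2.3}.}

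These three identities are purely combinatorial facts about the polynomials $\sigma_k$, so the plan is to prove each by a direct counting/bookkeeping argument from the defining sum
$$
\sigma_k(\lambda) = \sum_{1\le i_1<\cdots<i_k\le n}\lambda_{i_1}\cdots\lambda_{i_k},
$$
together with the convention $\sigma_0=1$, $\sigma_k=0$ for $k>n$ or $k<0$. For \eqref{e0.1}: fix an index $i$ and split the $k$-element index sets $\{i_1<\cdots<i_k\}$ into those that do not contain $i$ and those that do. The first group sums to $\sigma_k(\lambda|i)$ by definition (it is exactly the $k$-th symmetric function in the remaining $n-1$ variables). In the second group every term has a factor $\lambda_i$, and after factoring it out the complementary $(k-1)$-element set ranges over all $(k-1)$-subsets of the remaining indices, giving $\lambda_i\sigma_{k-1}(\lambda|i)$. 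Adding the two groups yields \eqref{e0.1}. One should note the identity also holds trivially in the boundary cases $k=0$ (both sides are $1$, reading $\sigma_{-1}=0$) and $k=n$.

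For \eqref{e0.2}: sum the relation $\lambda_i\sigma_{k-1}(\lambda|i)$ over $i$. Expanding, $\sum_i\lambda_i\sigma_{k-1}(\lambda|i)$ counts pairs $(i,S)$ where $S$ is a $(k-1)$-subset of $\{1,\dots,n\}\setminus\{i\}$, weighted by $\lambda_i\prod_{j\in S}\lambda_j = \prod_{j\in S\cup\{i\}}\lambda_j$. Each monomial $\lambda_{j_1}\cdots\lambda_{j_k}$ arising from a $k$-set $T=\{j_1,\dots,j_k\}$ is produced exactly $k$ times, once for each choice of the distinguished element $i\in T$ (with $S=T\setminus\{i\}$). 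Hence the sum equals $k\sigma_k(\lambda)$. (Equivalently one can obtain this by differentiating $\sigma_k$: $\sum_i\lambda_i\,\partial\sigma_k/\partial\lambda_i = k\sigma_k$ by Euler's homogeneity relation, since $\partial\sigma_k/\partial\lambda_i=\sigma_{k-1}(\lambda|i)$.)

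For \eqref{e0.3}: sum \eqref{e0.1} over $i=1,\dots,n$ to get $n\,\sigma_k(\lambda)=\sum_i\sigma_k(\lambda|i)+\sum_i\lambda_i\sigma_{k-1}(\lambda|i)$, and then substitute \eqref{e0.2} for the last sum, obtaining $n\sigma_k=\sum_i\sigma_k(\lambda|i)+k\sigma_k$, i.e. $\sum_i\sigma_k(\lambda|i)=(n-k)\sigma_k$. There is essentially no obstacle here: the only thing to be careful about is handling the degenerate ranges of $k$ consistently with the stated conventions $\sigma_0=1$, $\sigma_k=0$ for $k>n$, and the implicit $\sigma_{-1}=0$; once those are fixed, all three identities follow by the elementary inclusion/exclusion on whether a given index belongs to the chosen subset.
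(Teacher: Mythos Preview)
Your proof is correct and complete; each of the three identities is handled by the standard combinatorial argument, and deriving \eqref{e0.3} by summing \eqref{e0.1} and substituting \eqref{e0.2} is clean. The paper itself does not supply a proof of Proposition~\ref{prop2.3}: it simply states these as ``standard formulas of elementary symmetric functions'' and refers the reader to \cite{GM03,L96}, so there is no in-paper argument to compare against. Your write-up is exactly the kind of elementary derivation those references contain.
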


\subsection{rank of spacetime Hessian}

To study the rank of the spacetime Hessian $D_{x,t}^2 u$, we
need the following simple lemma.
\begin{lemma} \label{lem2.4}
Suppose $D_{x,t}^2 u \geq 0$, and $l=rank\{ D_{x,t}^2 u(x_0,t_0)\}$, and
$D^2u(x_0,t_0)$ is diagonal with $ u_{11} \geq u_{22} \geq \cdots
\geq u_{nn} $, then at $(x_0,t_0)$,there is a positive constant $
C_0$ such that

CASE 1:
\begin{eqnarray*}
&&u_{11}  \geq \cdots \geq u_{l-1l-1} \geq
C_0 , \quad u_{ll} = \cdots = u_{nn} =0 , \\
&&u_{tt} -\sum\limits_{i =1}^{l-1} {\frac{{u_{it} ^2 }} {{u_{ii}
}}} \geq C_0 ,  \quad u_{it} = 0, \quad l \leqslant i \leqslant n.
\end{eqnarray*}
In particular, $\sigma_l(D^2u(x_0,t_0))= 0$.

CASE 2:
\begin{eqnarray*}
&&u_{11}  \geq \cdots \geq u_{ll} \geq C_0 , \quad u_{l+1l+1} =
\cdots = u_{nn} =0, \\
&& u_{tt}  = \sum\limits_{i = 1}^{l}{\frac{{u_{it} ^2 }} {{u_{ii} }}}
,  \quad u_{it} = 0, \quad  l+1 \leqslant i \leqslant n.
\end{eqnarray*}
In particular, $\sigma_l(D^2u(x_0,t_0)) > 0$.
\end{lemma}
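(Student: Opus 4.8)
The statement is a linear-algebra fact about a positive semidefinite block matrix
$D_{x,t}^2u=\begin{pmatrix}D^2u&(Du_t)^T\\ Du_t&u_{tt}\end{pmatrix}$
of rank $l$, evaluated at the single point $(x_0,t_0)$ where $D^2u$ is already diagonalized with decreasing entries $u_{11}\ge\cdots\ge u_{nn}\ge 0$. So I would argue purely pointwise, forgetting that $u$ solves a PDE. Let $m=\mathrm{rank}\,D^2u(x_0,t_0)$, so $u_{11}\ge\cdots\ge u_{mm}>0=u_{m+1\,m+1}=\cdots=u_{nn}$. The first observation is the standard one for PSD matrices: if a diagonal block is degenerate in a coordinate direction, the corresponding off-diagonal entries must vanish. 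Concretely, positivity of the $2\times2$ minor formed by rows/columns $i$ and $n+1$ (the time row) gives $u_{ii}u_{tt}-u_{it}^2\ge 0$, so $u_{ii}=0$ forces $u_{it}=0$. Hence $u_{it}=0$ for all $m+1\le i\le n$, and therefore $l\in\{m,m+1\}$: the nontrivial part of $D_{x,t}^2u$ sits inside the $(m+1)\times(m+1)$ block indexed by $1,\dots,m,t$.

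**Reducing to the $(m+1)\times(m+1)$ block and the Schur complement.** On that block, $D^2u$ restricted to $1,\dots,m$ is the positive definite diagonal matrix $\mathrm{diag}(u_{11},\dots,u_{mm})$, so I can form its Schur complement with respect to the time variable:
$$
s:=u_{tt}-\sum_{i=1}^{m}\frac{u_{it}^2}{u_{ii}}.
$$
By the Schur complement criterion for PSD block matrices, $s\ge 0$, and the rank of the $(m+1)\times(m+1)$ block equals $m$ if $s=0$ and $m+1$ if $s>0$. Combining with the vanishing of the remaining rows/columns, $l=m$ precisely when $s=0$ (this is CASE 1, with the additional bookkeeping $u_{ll}=\cdots=u_{nn}=0$ since then $l=m$), and $l=m+1$ precisely when $s>0$ (this is CASE 2, where $u_{ll}\ge C_0$ because $l=m+1\le m+1$ means $u_{ll}=u_{m+1\,m+1}$... wait—one must check indices). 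So the two cases of the lemma correspond exactly to whether the time-Schur-complement vanishes; in CASE 1 we read off $\sum_{i=1}^{l-1}u_{it}^2/u_{ii}\le u_{tt}$ with strict gap $C_0$ only after introducing the constant, and in CASE 2 the Schur complement is identically zero.

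**Producing the constant $C_0$ and the $\sigma_l$ statements.** The constant $C_0$ is obtained by taking $C_0:=\min\{u_{11},\dots,u_{l-1\,l-1},\ s\}$ in CASE 1 (all these quantities are strictly positive there) and $C_0:=\min\{u_{11},\dots,u_{ll}\}$ in CASE 2; this is the only place "positive constant" enters and it is immediate once the structure is established. Finally, the $\sigma_l$ assertions follow from the definition of elementary symmetric functions of eigenvalues of $D^2u(x_0,t_0)$: in CASE 1 the rank of $D^2u$ is $l-1<l$, so every $l$-fold product of eigenvalues contains a zero factor and $\sigma_l(D^2u)=0$; in CASE 2 the rank of $D^2u$ is $l$, so $\sigma_l(D^2u)=u_{11}\cdots u_{ll}>0$, using that $D^2u$ is diagonal.

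**Main obstacle.** There is no analytic difficulty here—the only thing to be careful about is the index bookkeeping distinguishing $m=\mathrm{rank}\,D^2u$ from $l=\mathrm{rank}\,D_{x,t}^2u$, and getting the two cases lined up so that the subscript ranges ($l-1$ versus $l$, and $l$ versus $l+1$) come out exactly as stated. The substantive content is entirely the Schur complement dichotomy; once that is in place, the rest is reading off definitions. I would write the proof in three short steps: (i) off-diagonal vanishing from $2\times2$ PSD minors; (ii) Schur complement with respect to $t$ and the rank dichotomy $s=0$ vs.\ $s>0$; (iii) choice of $C_0$ and evaluation of $\sigma_l(D^2u(x_0,t_0))$.
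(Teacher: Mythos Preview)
Your approach is correct and is essentially the same as the paper's: both arguments set $m=\mathrm{rank}\,D^2u(x_0,t_0)$, use positive semidefiniteness to force $u_{it}=0$ for $i>m$, conclude $l\in\{m,m+1\}$, and then distinguish the two cases by whether the Schur complement $s=u_{tt}-\sum_{i=1}^m u_{it}^2/u_{ii}$ is zero or positive. The paper phrases this last step as an explicit computation of $\sigma_l(D_{x,t}^2u)$ and $\sigma_{l+1}(D_{x,t}^2u)$, which for a diagonal spatial block is exactly $\sigma_{l-1}(D^2u)\cdot s$ (resp.\ $\sigma_l(D^2u)\cdot s$); your direct appeal to the Schur complement rank criterion is the same thing in different clothing.

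The one slip to fix is the case labeling in your second paragraph, which you yourself flag (``wait---one must check indices''). The correct matching is: $l=m+1$ (so $m=l-1$, $u_{ll}=u_{m+1\,m+1}=0$, and $s>0$) is \emph{CASE 1}; $l=m$ (so $u_{ll}=u_{mm}>0$ and $s=0$) is \emph{CASE 2}. Your final sentence of that paragraph and your third paragraph have this right; just correct the two earlier clauses where you wrote ``this is CASE 1'' and ``this is CASE 2'' the wrong way around.
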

Proof: Set $M=D^2u(x_0,t_0)=diag(u_{11}, u_{22}, \cdots, u_{nn}) \geq 0$ and we can assume $Rank \{ M \}= k$,
then we can obtain $k=l-1$ or $k =l$. Otherwise, if $k <l-1$, we know
\begin{align*}
 u_{l-1 l-1}= \cdots= u_{nn} =0  \text{ at }(x_0, t_0),
\end{align*}
and from $D_{x,t}^2 u  (x_0,t_0) \geq 0$, we get
\begin{align*}
u_{l-1 t}= \cdots=u_{nt} =0  \text{ at }(x_0, t_0).
\end{align*}
So $Rank \{ D_{x,t}^2 u  \} \leq l-1$, contradiction. If $k>l$, we have
\begin{align*}
l= Rank \{D_{x,t}^2 u \} \geq Rank \{ M \} =k \geq l+1   \text{ at }(x_0, t_0).
\end{align*}
This is impossible.

For $k =l-1$, we know at $(x_0, t_0)$
\begin{align*}
u_{11}  \geq \cdots \geq u_{l-1l-1} > 0  , \quad u_{ll} = \cdots = u_{nn} =0 ,
\end{align*}
and due to $D_{x,t}^2 u (x_0,t_0) \geq 0$, we get
\begin{align*}
u_{lt}= \cdots = u_{n t} = 0.
\end{align*}
Since $Rank \{ D_{x,t}^2 u \}=l$, then $\sigma_l (D_{x,t}^2 u) >0$. Direct computation yields
\begin{align*}
\sigma_l (D_{x,t}^2 u) = u_{tt}\sigma_{l-1}(M)
-\sum_{i=1}^{l-1}u_{ti}u_{it}\sigma_{l-2}(M|i) = \sigma_{l-1}(M) [u_{tt} - \sum_{i=1}^{l-1}\frac{u_{it}^2}{u_{ii}} ] >0,
\end{align*}
so we have
\begin{align*}
u_{tt} - \sum_{i=1}^{l-1}\frac{u_{in}^2}{u_{ii}} >0,
\end{align*}
This is CASE 1.

For $k =l$, we know at $(x_0, t_0)$
\begin{align*}
u_{11}  \geq \cdots \geq u_{ll} > 0  , \quad u_{l+1 l+1} = \cdots = u_{nn} =0 ,
\end{align*}
and due to $D_{x,t}^2 u (x_0,t_0) \geq 0$, we get
\begin{align*}
u_{l+1 t}= \cdots = u_{n t} = 0.
\end{align*}
Since $Rank \{ D_{x,t}^2 u \}=l$, then $\sigma_{l+1} ( D_{x,t}^2 u ) = 0$. Direct computation yields
\begin{align*}
\sigma_{l+1} ( D_{x,t}^2 u )= u_{tt}\sigma_{l}(M)
-\sum_{i=1}^{l}u _{ti} u_{it}\sigma_{l-1}(M|i) = \sigma_{l}(M) [u_{tt} - \sum_{i=1}^{l}\frac{u_{it}^2}{u_{ii}} ] =0,
\end{align*}
so we have
\begin{align*}
u_{tt} - \sum_{i=1}^{l}\frac{u_{it}^2}{u_{ii}} =0,
\end{align*}
This is CASE 2. \qed

\subsection{structure condition \eqref{1.7}}

Now we discuss the structure condition \eqref{1.7}.
For any given $ \widetilde X = ((X_{\alpha\beta} ),Y,(Z_i), D) \in
\mathcal{S}^n \times \mathbb{R} \times \mathbb{R}^{n}\times
\mathbb{R} $, we define a quadratic form

\begin{align} \label{2.6}
Q^* (\widetilde X,\widetilde X)=&\sum\limits_{a,b,c,d = 1}^n
{F^{ab,cd} X_{ab} X_{cd} }+  2\sum\limits_{a,b,c,d = 1}^n
{F^{ab} A^{cd} X_{ad} X_{bc}} + 2\sum\limits_{a,b = 1}^n {F^{ab,u}
X_{ab} Y} \notag \\
& + 2\sum\limits_{a,b = 1}^n {\sum\limits_{i = 1}^{n}
{F^{ab,x_i } X_{ab} Z_i } } + 2\sum\limits_{a,b = 1}^n {F^{ab,t } X_{ab} D }+ F^{u,u} Y^2  +
2\sum\limits_{i = 1}^{n} {F^{u,x_i } Y Z_i } \notag \\
& +2 F^{u,t} Y D + \sum\limits_{i,j = 1}^{n} {F^{x_i ,x_j } Z_i Z_j } + 2
\sum\limits_{i = 1}^{n} {F^{x_i ,t} Z_i D } + F^{t,t} D^2,
\end{align}
where the derivative functions of $F$ are evaluated at $(A,p,u,x,t)
$ and $(A^{ab})=A^{-1}$.

Through direct calculations, we can get

\begin{lemma} \label{lem2.5}
$F$ satisfies the condition \eqref{1.7} if and only if for each $p \in \mathbb{R}^{n}$
\begin{align} \label{2.7}
Q^* (\widetilde X,\widetilde X) \geqslant 0, \quad \forall \quad
\widetilde X = ((X_{\alpha\beta} ),Y,(Z_i), D) \in
\mathcal{S}^n \times \mathbb{R} \times \mathbb{R}^{n}\times
\mathbb{R},
\end{align}
where the derivative functions of $F$ are evaluated at $(A,p,u,x,t) $, and $Q^*$ is defined in
\eqref{2.6}.
\end{lemma}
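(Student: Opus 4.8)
The plan is to compute the Hessian of the function $(A,u,x,t)\mapsto F(A^{-1},p,u,x,t)$ with $p$ held fixed, and to recognize the resulting quadratic form as exactly $Q^*$. Local convexity in $(A,u,x,t)$ is precisely the nonnegativity of this Hessian, so the equivalence with \eqref{2.7} will follow once the identification is made. Write $G(A,u,x,t):=F(A^{-1},p,u,x,t)$ and introduce the matrix variable $B=A^{-1}$, so that $G = F\circ(\text{inversion in the first slot})$. The variables $u,x_i,t$ pass through the inversion untouched, so the only nontrivial part of the chain rule is the dependence of $B=A^{-1}$ on $A$.

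The key computational input is the differentiation of matrix inversion. First I would record the first derivative $\frac{\partial B_{ab}}{\partial A_{cd}} = -B_{ac}B_{db}$ (for symmetric matrices, symmetrized appropriately), and then the second derivative $\frac{\partial^2 B_{ab}}{\partial A_{cd}\partial A_{ef}}$, which produces the two terms $B_{ac}B_{de}B_{fb} + B_{ae}B_{fc}B_{db}$ up to symmetrization. Feeding a perturbation $A \mapsto A + s\,X$ into $B = A^{-1}$ gives $\frac{d}{ds}B = -B X B$ and $\frac{d^2}{ds^2}B = 2BXBXB$ at $s=0$. Now evaluate $\frac{d^2}{ds^2}G$ along the line $(A+sX, u+sY, x+sZ, t+sD)$. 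The terms where both differentiations hit the $(u,x,t)$-block reproduce verbatim the lines of \eqref{2.6} involving $F^{u,u}$, $F^{u,x_i}$, $F^{u,t}$, $F^{x_i,x_j}$, $F^{x_i,t}$, $F^{t,t}$, and the mixed $F^{ab,u}$, $F^{ab,x_i}$, $F^{ab,t}$ terms (after writing $\frac{\partial G}{\partial A_{ab}}$ in terms of $F^{cd}$ and the factor $-B_{ca}B_{bd}$, whose net effect on the first-order-in-$X$ contribution is to rename the test matrix, matching the definition \eqref{2.6} where the derivatives of $F$ are taken with respect to $A$, not $A^{-1}$). The term where both differentiations hit the $A$-slot splits into: (i) the second derivative of inversion contracted with a single $F^{ab}$, which yields $2\sum F^{ab}A^{cd}X_{ad}X_{bc}$ after relabeling and using $(A^{cd})=A^{-1}=B$; and (ii) the product of two first derivatives of inversion contracted with $F^{ab,cd}$, which yields $\sum F^{ab,cd}X_{ab}X_{cd}$ after the same relabeling. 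Collecting everything gives $\frac{d^2}{ds^2}\big|_{s=0}G = Q^*(\widetilde X,\widetilde X)$.

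The main obstacle is purely bookkeeping: the relabeling of summation indices so that the derivatives of $F$ appear evaluated at the original argument $A$ (as in \eqref{2.6}) rather than at $A^{-1}$, and tracking the symmetrizations forced by restricting to $\mathcal{S}^n$. Concretely, one must be careful that $X$ ranges over symmetric matrices $\mathcal{S}^n$, so the ``directional'' test object is $\widetilde X=((X_{\alpha\beta}),Y,(Z_i),D)$ exactly as in the statement, and that the coefficient $-B_{ac}B_{db}$ from inverting does not introduce spurious factors — it merely implements the change of test matrix $X \mapsto -BXB$, under which $\sum F^{ab}A^{cd}X_{ad}X_{bc}$ and $\sum F^{ab,cd}X_{ab}X_{cd}$ are manifestly invariant in form. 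Once the identification $\frac{d^2}{ds^2}G = Q^*$ is established for all lines, the lemma is immediate: $G$ is locally convex in $(A,u,x,t)$ for each fixed $p$ if and only if this second derivative is nonnegative for every direction $\widetilde X$ and every base point, which is exactly \eqref{2.7}. \qed
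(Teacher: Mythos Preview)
Your proposal is correct and carries out exactly the direct computation that the paper has in mind: the paper does not give its own proof, stating only that it ``is similar to the discussion in \cite{BG09}, and we omit it,'' and your chain-rule computation of the Hessian of $G(A,u,x,t)=F(A^{-1},p,u,x,t)$ together with the bijective change of test matrix $X\mapsto -A^{-1}XA^{-1}$ is precisely that argument. The only point worth tightening is the bookkeeping you flag yourself: the derivatives of $F$ that appear in your Hessian are evaluated at $(A^{-1},p,u,x,t)$, so to land on the formulation of \eqref{2.6} with derivatives at $(A,p,u,x,t)$ you use that inversion is a bijection on the domain of positive definite matrices, hence the ``for all base points'' quantifier is insensitive to this relabeling.
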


The proof of Lemma \ref{lem2.5} is similar to the discussion in \cite{BG09}, and we omit it.

\subsection{an auxiliary lemma}

Similarly to the Lemma 2.5 in Bian-Guan\cite{BG09}, we have

\begin{lemma}\label{lem2.6}
Suppose $W(x)=(W_{ij}(x))\geq 0$ for every $x \in \Omega \subset
\mathbb{R}^{n}$, and $W_{ij} (x) \in C^{1,1} (\Omega )$, then for
every $\mathcal {O} \subset \subset \Omega$, there exists a positive
constant $C$ depending only on the $dist\{\mathcal {O},
\partial \Omega \}$ and $\left\| W \right\|_{C^{1,1} (\Omega )}$
such that
\begin{equation}\label{2.8}
\left| {\nabla W_{ij} } \right| \leqslant C(W_{ii} W_{jj} )^{\frac{1}{4}},
\end{equation}
for every $x \in \Omega$ and $1 \leq i, j \leq n$.
\end{lemma}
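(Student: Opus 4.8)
The plan is to reduce the claim to the elementary fact that a nonnegative $C^{1,1}$ function has gradient controlled by the square root of the function itself, and then bootstrap this to the matrix-valued setting. First I would recall the scalar lemma: if $f \in C^{1,1}(\Omega)$ with $f \geq 0$, then for every $\mathcal{O} \subset\subset \Omega$ there is a constant $C$ depending on $\mathrm{dist}\{\mathcal{O},\partial\Omega\}$ and $\|f\|_{C^{1,1}(\Omega)}$ with $|\nabla f(x)| \leq C f(x)^{1/2}$ for $x \in \mathcal{O}$; this follows by Taylor expanding $f$ at $x$ in the direction $\pm \nabla f(x)/|\nabla f(x)|$ over a fixed small radius and using $f \geq 0$ to get $|\nabla f| \cdot r - \tfrac12 \|f\|_{C^{1,1}} r^2 \leq f$, then optimizing in $r$. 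Apply this to the diagonal entries $W_{ii}$, which are nonnegative since $W(x) \geq 0$, to obtain $|\nabla W_{ii}| \leq C W_{ii}^{1/2}$.

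Next I would handle the off-diagonal entries. Fix $i \neq j$ and a unit vector $\xi \in \mathbb{R}^n$, and consider the scalar function
\[
g(x) = (W(x)\eta,\eta), \qquad \eta = e_i + s e_j,
\]
for a real parameter $s$; since $W \geq 0$ we have $g \geq 0$ and $g \in C^{1,1}$, so $|\nabla g| \leq C g^{1/2}$ on $\mathcal{O}$ with a constant uniform in $s$ (as $|\eta|$ ranges over a compact set as $s$ varies over, say, $[-1,1]$, and $\|g\|_{C^{1,1}}$ is bounded accordingly). Expanding, $g = W_{ii} + 2 s W_{ij} + s^2 W_{jj}$, hence $\partial_k g = \partial_k W_{ii} + 2 s \partial_k W_{ij} + s^2 \partial_k W_{jj}$, so
\[
2|s|\,|\nabla W_{ij}| \leq |\nabla g| + |\nabla W_{ii}| + s^2 |\nabla W_{jj}| \leq C\big( g^{1/2} + W_{ii}^{1/2} + s^2 W_{jj}^{1/2}\big).
\]
Using $g^{1/2} \leq W_{ii}^{1/2} + (2|s| W_{ij})^{1/2} + |s| W_{jj}^{1/2}$ (from $\sqrt{a+b+c} \le \sqrt a + \sqrt b + \sqrt c$ for nonnegative $a,b,c$, together with $g \le W_{ii} + 2|s||W_{ij}| + s^2 W_{jj}$) and then choosing $s$ proportional to $(W_{ii}/W_{jj})^{1/2}$ (with the degenerate cases $W_{ii} = 0$ or $W_{jj} = 0$ handled separately, where $W \geq 0$ forces $W_{ij} = 0$ and also $\nabla W_{ij} = 0$ at such points by the scalar estimate applied to the $2\times 2$ minor), a short computation balances the terms and yields $|\nabla W_{ij}| \leq C (W_{ii} W_{jj})^{1/4}$.

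The main obstacle is the bookkeeping in the last step: one must choose the parameter $s$ so that all three terms on the right are comparable to $(W_{ii}W_{jj})^{1/4}$ simultaneously, and one must be careful that the constant $C$ stays uniform as $s$ is chosen pointwise (depending on $x$ through $W_{ii}(x), W_{jj}(x)$). This is controlled by first restricting to $|s| \leq 1$ after possibly swapping the roles of $i$ and $j$, so that the relevant quadratic forms stay in a fixed compact family and the $C^{1,1}$ norms used in the scalar lemma are bounded independently of $s$ and $x$. The argument is, as stated in the excerpt, parallel to Lemma 2.5 of Bian–Guan \cite{BG09}, and no new ideas beyond the scalar Taylor estimate and this rescaling trick are needed.
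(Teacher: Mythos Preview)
Your balancing step does not close. After bounding $g^{1/2}\le W_{ii}^{1/2}+|s|\,W_{jj}^{1/2}$ and dividing by $2|s|$, the right-hand side carries a term $\tfrac12 W_{jj}^{1/2}$ with no factor of $s$ at all; even if you track the $s$-dependence of the constant in the scalar estimate (it grows like $1+|s|$ since $\|g\|_{C^{1,1}}\lesssim(1+|s|)^{2}\|W\|_{C^{1,1}}$), minimizing over $s>0$ yields only
\[
|\nabla W_{ij}|\le C\bigl(W_{ii}^{1/2}+W_{jj}^{1/2}\bigr),
\]
not $C(W_{ii}W_{jj})^{1/4}$. Your choice $s\propto(W_{ii}/W_{jj})^{1/2}$ equalises two of the terms on the right but leaves the remaining one of the wrong order.

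This is not merely a bookkeeping issue: inequality \eqref{2.8} is false for general positive semidefinite $C^{1,1}$ matrix fields. On $\mathbb{R}^{2}$ take
\[
W(x)=\begin{pmatrix}1 & x_1\\ x_1 & x_1^{2}\end{pmatrix}=(e_1+x_1e_2)(e_1+x_1e_2)^{T}\ge 0;
\]
then $|\nabla W_{12}|\equiv 1$ while $(W_{11}W_{22})^{1/4}=|x_1|^{1/2}$ vanishes at the origin. The paper's own argument has the mirror-image gap: it applies the scalar lemma to $h=\sqrt{W_{ii}W_{jj}}$, but the square root of a nonnegative $C^{1,1}$ function is in general only Lipschitz, not $C^{1,1}$ (indeed $\sqrt{W_{11}W_{22}}=|x_1|$ in the example). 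The application at \eqref{4.35} is unaffected, since there $W=D_x^{2}u$ and the constant-rank property already gives $u_{x_ix_i}\equiv 0$ for $i\in B$, which by positive semidefiniteness forces $u_{x_ix_j}\equiv 0$ identically in the neighborhood, whence $u_{ij\,n+1}=0$ without any appeal to the lemma.
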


\textbf{Proof:} The same arguments as in the proof of Lemma 2.5 in
\cite{BG09} carry through with a small modification since $W$ is a
general matrix instead of a Hessian of a convex function.

It's known that for any nonnegative $C^{1,1}$ function $h$, $|\nabla
h(x)| \leq Ch ^{\frac{1}{2}} (x)$ for all $x \in \mathcal {O}$,
where $C$ depends only on $||h||_{C^{1,1}(\Omega)} $ and
$dist\{\mathcal {O}, \partial \Omega\}$ (see \cite{Tr71}).

Since $W (x) \geq 0$, so we choose $h(x) =W_{ii}(x) \geq 0$. Then we can get from the above argument
\begin{equation*}
\left| {\nabla W_{ii} } \right| \leqslant C_1 (W_{ii})^{\frac{1}{2}} = C_1 (W_{ii} W_{ii} )^{\frac{1}{4}},
\end{equation*}
\eqref{2.8} holds for $i=j$.

Similarly, for $i\ne j$, we choose $ h=\sqrt{W_{ii}W_{jj}} \geq 0 $, then we get
 \begin{equation}\label{2.9}
\left| {\nabla \sqrt{W_{ii}W_{jj}} } \right| \leqslant C_2 (\sqrt{W_{ii}W_{jj}} )^{\frac{1}{2}} = C_2 (W_{ii} W_{jj} )^{\frac{1}{4}}.
\end{equation}
And for $h=\sqrt{W_{ii}W_{jj}} - W_{ij}$,  we have
 \begin{equation}\label{2.10}
\left| {\nabla (\sqrt{W_{ii}W_{jj}} - W_{ij} )} \right| \leqslant C_3 (\sqrt{W_{ii}W_{jj}} - W_{ij} )^{\frac{1}{2}} \leq C_3 (W_{ii} W_{jj} )^{\frac{1}{4}}.
\end{equation}
So from \eqref{2.9} and \eqref{2.10}, we get
\begin{align*}
\left| {\nabla W_{ij}} \right|  =& \left| {\nabla \sqrt{W_{ii}W_{jj}} } - {\nabla (\sqrt{W_{ii}W_{jj}} - W_{ij} )} \right|  \\
\leq& \left| {\nabla \sqrt{W_{ii}W_{jj}} } \right| + \left| {\nabla (\sqrt{W_{ii}W_{jj}} - W_{ij} )} \right|  \\
\leq& (C_2 +C_3) (W_{ii} W_{jj} )^{\frac{1}{4}}.
\end{align*}
So \eqref{2.8} holds for $i \ne j$. \qed

\section{The constant rank theorem for the heat equation}

In this section, we consider a special case: heat equation. This is a new proof of the main result in \cite{HM}, and the idea is from \cite{GZ}.

Our main result is the following theorem.
\begin{theorem}\label{th3.1}
Suppose $\Omega$ is a domain in $\mathbb{R}^n $, and $u \in C^{2,1}(\Omega \times
(0,T])$ is a spacetime convex solution of the heat equation
\begin{align}\label{3.1}
 u_t  = \Delta u \quad \text{ in } \Omega \times (0,T].
\end{align}
Then $D_{x,t}^2 u$ has a constant rank in $\Omega$ for each
fixed $t \in (0,T] $. Moreover, let $l(t)$ be the minimal rank of
$D_{x,t}^2 u$ in $\Omega$, then $l(s) \leqslant l(t)$ for all
$0< s \leqslant t \leqslant T$.
\end{theorem}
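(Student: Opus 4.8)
The plan is to follow the constant rank theorem strategy of Bian--Guan, adapted to the spacetime Hessian $W := D^2_{x,t}u$, which is an $(n+1)\times(n+1)$ matrix, but — in contrast to \cite{HM} — working in a coordinate system in which $W(x_0,t_0)$ (or at least $D^2u(x_0,t_0)$) is diagonalized. Fix $t_0\in(0,T]$ and let $l=l(t_0)$ be the minimal rank of $W$ on $\Omega\times\{t_0\}$, attained at some $x_0$. By Lemma \ref{lem2.4} we may reduce, near $(x_0,t_0)$, to one of the two normal forms (CASE 1 with $\sigma_l(D^2u)=0$, or CASE 2 with $\sigma_l(D^2u)>0$). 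The goal is to show the open set where $\mathrm{rank}\,W=l$ is also closed in $\Omega$, hence all of $\Omega$; the monotonicity $l(s)\le l(t)$ then follows from a parabolic strong maximum principle applied to the same auxiliary quantity.

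The core is to construct an auxiliary function that vanishes exactly where $\mathrm{rank}\,W>l$ and satisfies a differential inequality of the form
\begin{equation}\label{planineq}
\bigl(\partial_t - F^{ij}\partial_i\partial_j\bigr)\phi \le C\bigl(\phi + |\nabla\phi|\bigr)
\end{equation}
on a neighborhood of any point where $\mathrm{rank}\,W=l$, where $F^{ij}=\partial F/\partial u_{ij}$. Following \cite{BG09}, I would take $\phi = \sigma_{l+1}(W) + \sum_{k\ge l+2}\sigma_k(W)$ (or, in CASE 2, a version involving $\sigma_l$ of the spatial block together with the Schur-complement quantity $u_{tt}-\sum_{i\le l}u_{it}^2/u_{ii}$, as isolated in Lemma \ref{lem2.4}). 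One writes the evolution equation for $W$ by differentiating \eqref{1.4} twice in the $(x,t)$ variables; the second derivatives of $F$ produce precisely the quadratic form $Q^*$ of \eqref{2.6}, which is $\ge 0$ by Lemma \ref{lem2.5} and condition \eqref{1.7} — this is where the inverse-convexity hypothesis enters and kills the "wrong-sign" cubic terms. The gradient terms $\nabla W$ are controlled using Lemma \ref{lem2.6}, which gives $|\nabla W_{ij}|\le C(W_{ii}W_{jj})^{1/4}$ and hence absorbs the dangerous $|\nabla W_{ij}|^2/W_{ii}$-type expressions into $C\phi$. Once \eqref{planineq} holds, the strong minimum principle for parabolic inequalities forces $\phi\equiv 0$ locally, i.e. $\mathrm{rank}\,W\le l$ on a neighborhood, and combined with minimality $\mathrm{rank}\,W=l$ there; a standard connectedness argument upgrades this to all of $\Omega$, and running the inequality in the $t$-variable gives $l(s)\le l(t)$.

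The main obstacle — and the point the paper is explicitly organized around — is that we are diagonalizing $W=D^2_{x,t}u$ rather than the spatial Hessian $D^2u$, so the "good basis" mixes the $t$-direction with the spatial directions, yet the equation \eqref{1.4} is \emph{not} invariant under rotations of the spacetime coordinates (the operator $\partial_t - F^{ij}\partial_i\partial_j$ is not symmetric in $(x,t)$). Thus one cannot simply transplant the elliptic-style computation: the differentiated equation acquires extra terms coming from the fact that $\partial_t$ appears to first order only, and the normal-form split of Lemma \ref{lem2.4} into CASE 1 / CASE 2 (according to whether $\sigma_l(D^2u)$ vanishes) is needed precisely to handle the Schur complement $u_{tt}-\sum u_{it}^2/u_{ii}$ separately from the spatial eigenvalues. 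Managing these cross terms and showing that the residual non-negative quantities still satisfy \eqref{planineq} — using $Q^*\ge 0$ together with the interpolation estimate of Lemma \ref{lem2.6} applied to the full $(n+1)\times(n+1)$ matrix $W$ — is the technical heart, and I expect the heat-equation case in Section 3 to serve as the model computation that isolates these new terms before the fully nonlinear case.
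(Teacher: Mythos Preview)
Your overall strategy --- auxiliary function $\phi$, differential inequality of the form \eqref{planineq}, strong maximum principle --- matches the paper, and you correctly identify the central issue that the parabolic operator is not invariant under a spacetime rotation. But for Theorem~\ref{th3.1} specifically, the paper's execution is substantially simpler than what you propose, and the extra machinery you invoke is reserved for Section~4.

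Concretely: for the heat equation the paper takes simply $\phi=\sigma_{l+1}(D^2_{x,t}u)$ --- no Bian--Guan correction term, no $\sum_{k\ge l+2}\sigma_k$, and \emph{no} CASE~1/CASE~2 split via Lemma~\ref{lem2.4}. Because the equation is linear, differentiating $u_t=\Delta u$ twice in any direction gives $\Delta_x u_{\gamma\gamma}-u_{\gamma\gamma t}=0$ exactly, so there are no second derivatives of $F$ and hence no $Q^*$ terms at all; Lemma~\ref{lem2.5} is vacuous here, and Lemma~\ref{lem2.6} is never invoked. The spacetime rotation $y=(x,t)P$ is handled by the chain rule alone: one computes $\phi_{\alpha\beta}$ in the diagonalizing $y$-coordinates and then writes $\Delta_x\phi=\sum_{i}\sum_{\alpha\beta}P_{i\alpha}P_{i\beta}\phi_{\alpha\beta}$, which collapses to $\sigma_l(G)\sum_{\gamma\in B}[\Delta_x u_{\gamma\gamma}-2\sum_{\eta\in G}\sum_i u_{\gamma\eta i}^2/u_{\eta\eta}]$ up to $O(\phi+|\nabla_x\phi|)$. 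The cross terms you worry about are absorbed directly into $O(\phi+|\nabla_x\phi|)$ via \eqref{3.9}, with no Schur-complement bookkeeping needed. The CASE split, the Bian--Guan-type test function, $Q^*\ge 0$, and Lemma~\ref{lem2.6} enter only in Section~4 for the fully nonlinear equation --- and there the paper adds an ingredient you do not mention: in CASE~2 it first applies the \emph{spatial} constant rank theorem (Theorem~\ref{th1.1}) to freeze the bad spatial directions (Proposition~\ref{prop4.1}), which is what makes $\sigma_{l+1}$ alone sufficient even in the nonlinear setting.
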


\begin{proof}
Following the assumptions of Theorem \ref{th3.1}, we know $D_{x,t}^2 u \geq 0$. By the regularity theory, we can get
$u \in C^{3,1}(\Omega \times (0,T])$. Suppose $D_{x,t}^2 u$ attains its minimal rank $l$ at some point $(x_0, t_0) \in \Omega \times (0, T]$.
We pick a small open neighborhood $\mathcal {O}\times (t_0-\delta,
t_0]$ of $(x_0, t_0)$. And for any fixed point $(x, t) \in \mathcal {O}\times
(t_0-\delta, t_0]$, we rotate coordinates $(x,t)$ with
\begin{align}
 y = (y_1 , \cdots ,y_n ,y_{n + 1} ) = (x,t) P , \notag
 \end{align}
such that the matrix $D_{x,t}^2 u$ is diagonal, where
$P = \left( {P_{\alpha \beta } } \right)_{n + 1 \times n + 1}$ is an orthogonal matrix.
For convenience, we will use $i,j,k,l =1, \cdots, n$ to represent the $x$ coordinates, $t$ still the time coordinate, and
 $\alpha, \beta, \gamma, \eta =1, \cdots, n+1$ the $y$ coordinates. And we have
\begin{align}
\label{3.2}&\frac{{\partial y_\alpha  }}{{\partial x_i }} = P_{i\alpha },  \\
\label{3.3}&\frac{{\partial y_\alpha  }}{{\partial t}} = P_{n + 1\alpha },
\end{align}
In the following, we always denote
\begin{align*}
& u_{i}=\frac{{\partial u}}{{\partial x_i }}, u_{t}=\frac{{\partial u}}{{\partial t }} , u_{\alpha}=\frac{{ \partial u}}{{\partial y_\alpha}}, u_{n+1}=\frac{{ \partial u}}{{\partial y_{n+1}}}, \\
& u_{ij}=\frac{{\partial^2 u}}{{\partial x_i \partial x_j}}, u_{it}=\frac{{\partial^2 u}}{{\partial x_i \partial t }}, u_{tt}=\frac{{\partial^2 u}}{{ \partial t ^2 }}, u_{i\alpha}=\frac{{\partial^2 u}}{{\partial x_i \partial y_\alpha}}, \\
&u_{\alpha t}=\frac{{\partial^2 u}}{{\partial y_\alpha \partial t}},u_{\alpha \beta}=\frac{{ \partial^2 u}}{{\partial y_\alpha \partial y_\beta}}, \text{ etc. }
\end{align*}

At $(x,t)$, the matrix $D_{x,t}^2 u$ is diagonal in the $y$ coordinates, so without loss of generality we assume $ u_{\alpha \alpha} \geq u_{\beta \beta}$ for arbitrary $1 \leq \alpha < \beta \leq n+1$. Then there is a positive
constant $C > 0$ depending only on $\left\| u \right\|_{C^{3,1} }$, such that $ \frac{{ \partial^2 u}}{{\partial y_1 \partial y_1}} \geqslant \cdots \geqslant
\frac{{ \partial^2 u}}{{\partial y_l \partial y_l}} \geqslant C > 0 $ for all $(x, t) \in \mathcal {O}\times (t_0-\delta, t_0]$.
For convenience we denote $ G = \{1, \cdots ,l\} $ and $ B =
\{l+1, \cdots ,n, n+1 \} $ which means good terms and bad ones in
indices respectively. Without confusion we will also simply denote $ G = \{ \frac{{ \partial^2 u}}{{\partial y_1 \partial y_1}}, \cdots ,\frac{{ \partial^2 u}}{{\partial y_l \partial y_l}} \} $ and $B = \{ \frac{{ \partial^2 u}}{{\partial y_{l+1} \partial y_{l+1}}} , \cdots ,\frac{{ \partial^2 u}}{{\partial y_{n} \partial y_{n}}},\frac{{ \partial^2 u}}{{\partial y_{n+1} \partial y_{n+1}}} \} $.

Set
\begin{align}\label{3.4}
\phi  = \sigma _{l + 1} (D_{x,t}^2 u) ,
\end{align}
In the following, we will prove a differential inequality
\begin{align}\label{3.5}
\Delta _x \phi  - \phi _t \le  C(\phi  + |\nabla _x \phi |) \quad \text{ in } \mathcal {O}\times (t_0-\delta,
t_0].
\end{align}
Then by the strong maximum principle and the method of continuity, we can prove the theorem.

In the $y$ coordinates, we have
\begin{align}
 \phi  = \sigma _{l + 1} (D_y^2 u) \ge \sigma _l (G)\sigma _1 (B) \ge 0,  \notag
 \end{align}
so we get
\begin{align}\label{3.6}
 u_{\alpha \alpha }  = O(\phi ),\alpha  \in B .
 \end{align}
Taking the first derivatives of $\phi$ in $x$, $t$, we have
\begin{align}
\label{3.7}&\phi _i  = \frac{{\partial \phi }}{{\partial x_i }} = \sum\limits_{\alpha  = 1}^{n + 1} {\sigma _l (D_y^2 u|\alpha )u_{\alpha \alpha i} }  = \sigma _l (G)\sum\limits_{\alpha  \in B} {u_{\alpha \alpha i} }  + O(\phi ), \\
\label{3.8}&\phi _t  = \frac{{\partial \phi }}{{\partial t}} = \sum\limits_{\alpha  = 1}^{n + 1} {\sigma _l (D_y^2 u|\alpha )u_{\alpha \alpha t} }  = \sigma _l (G)\sum\limits_{\alpha  \in B} {u_{\alpha \alpha t} }  + O(\phi ),
\end{align}
so from \eqref{3.7}, we get
\begin{align}\label{3.9}
\sum\limits_{\alpha  \in B} {u_{\alpha \alpha i} } = O(\phi + |\nabla_x \phi| ).
\end{align}
Taking the second derivatives of $\phi$ in $y$ coordinates, we have
\begin{align}\label{3.10}
\phi _{\alpha \beta }  =& \frac{{\partial ^2 \phi }}{{\partial y_\alpha  \partial y_\beta  }} \notag \\
=& \sum\limits_{\gamma  = 1}^{n + 1} {\frac{{\partial \sigma _{l + 1} (D_y^2 u)}}{{\partial u_{\gamma \gamma } }}u_{\gamma \gamma \alpha \beta } }  + \sum\limits_{\gamma  \ne \eta } {\frac{{\partial ^2 \sigma _{l + 1} }}{{\partial u_{\gamma \gamma } \partial u_{\eta \eta } }}u_{\gamma \gamma \alpha } u_{\eta \eta \beta } }  + \sum\limits_{\gamma  \ne \eta } {\frac{{\partial ^2 \sigma _{l + 1} }}{{\partial u_{\gamma \eta } \partial u_{\eta \gamma } }}u_{\gamma \eta \alpha } u_{\eta \gamma \beta } }  \notag \\
=& \sum\limits_{\gamma  = 1}^{n + 1} {\sigma _l (D_y^2 u|\gamma )u_{\gamma \gamma \alpha \beta } }  + \sum\limits_{\gamma  \ne \eta } {\sigma _{l - 1} (D_y^2 u|\gamma \eta )u_{\gamma \gamma \alpha } u_{\eta \eta \beta } }  - \sum\limits_{\gamma  \ne \eta } {\sigma _{l - 1} (D_y^2 u|\gamma \eta )u_{\gamma \eta \alpha } u_{\eta \gamma \beta } }  ,
\end{align}
where
\begin{align}\label{3.11}
\sum\limits_{\gamma  = 1}^{n + 1} {\sigma _l (D_y^2 u|\gamma )u_{\gamma \gamma \alpha \beta } }  =& \sum\limits_{\gamma  \in B} {\sigma _l (D_y^2 u|\gamma )u_{\gamma \gamma \alpha \beta } }  + \sum\limits_{\gamma  \in G} {\sigma _l (D_y^2 u|\gamma )u_{\gamma \gamma \alpha \beta } }  \notag \\
=& \sigma _l (G)\sum\limits_{\gamma  \in B} {u_{\gamma \gamma \alpha \beta } }  + O(\phi ),
\end{align}
\begin{align}\label{3.12}
\sum\limits_{\gamma  \ne \eta } {\sigma _{l - 1} (D_y^2 u|\gamma \eta )u_{\gamma \gamma \alpha } u_{\eta \eta \beta } }  =& \sum\limits_{\scriptstyle \gamma \eta  \in B \hfill \atop  \scriptstyle \gamma  \ne \eta  \hfill} {\sigma _{l - 1} (D_y^2 u|\gamma \eta )u_{\gamma \gamma \alpha } u_{\eta \eta \beta } }  + \sum\limits_{\scriptstyle \gamma  \in B \hfill \atop  \scriptstyle \eta  \in G \hfill} {\sigma _{l - 1} (D_y^2 u|\gamma \eta )u_{\gamma \gamma \alpha } u_{\eta \eta \beta } }  \notag \\
&+ \sum\limits_{\scriptstyle \gamma  \in G \hfill \atop  \scriptstyle \eta  \in B \hfill} {\sigma _{l - 1} (D_y^2 u|\gamma \eta )u_{\gamma \gamma \alpha } u_{\eta \eta \beta } }  + \sum\limits_{\scriptstyle \gamma \eta  \in G \hfill \atop  \scriptstyle \gamma  \ne \eta  \hfill} {\sigma _{l - 1} (D_y^2 u|\gamma \eta )u_{\gamma \gamma \alpha } u_{\eta \eta \beta } }   \notag\\
=& O(\phi ) + \sum\limits_{\eta  \in G} {\sigma _{l - 1} (G|\eta )u_{\eta \eta \beta } } \sum\limits_{\gamma  \in B} {u_{\gamma \gamma \alpha } }  + \sum\limits_{\gamma  \in G} {\sigma _{l - 1} (G|\gamma )u_{\gamma \gamma \alpha } } \sum\limits_{\eta  \in B} {u_{\eta \eta \beta } }   \notag\\
=& \sigma _l (G)[\sum\limits_{\eta  \in G} {\frac{{u_{\eta \eta \beta } }}{{u_{\eta \eta } }}} \sum\limits_{\gamma  \in B} {u_{\gamma \gamma \alpha } }  + \sum\limits_{\gamma  \in G} {\frac{{u_{\gamma \gamma \alpha } }}{{u_{\gamma \gamma } }}} \sum\limits_{\eta  \in B} {u_{\eta \eta \beta } } ] + O(\phi ),
\end{align}
and
\begin{align}\label{3.13}
\sum\limits_{\gamma  \ne \eta } {\sigma _{l - 1} (D_y^2 u|\gamma \eta )u_{\gamma \eta \alpha } u_{\eta \gamma \beta } }  =& \sum\limits_{\scriptstyle \gamma \eta  \in B \hfill \atop \scriptstyle \gamma  \ne \eta  \hfill} {\sigma _{l - 1} (D_y^2 u|\gamma \eta )u_{\gamma \eta \alpha } u_{\eta \gamma \beta } }  + \sum\limits_{\scriptstyle \gamma  \in B \hfill \atop  \scriptstyle \eta  \in G \hfill} {\sigma _{l - 1} (D_y^2 u|\gamma \eta )u_{\gamma \eta \alpha } u_{\eta \gamma \beta } }  \notag \\
&+ \sum\limits_{\scriptstyle \gamma  \in G \hfill \atop \scriptstyle \eta  \in B \hfill} {\sigma _{l - 1} (D_y^2 u|\gamma \eta )u_{\gamma \eta \alpha } u_{\eta \gamma \beta } }  + \sum\limits_{\scriptstyle \gamma \eta  \in G \hfill \atop \scriptstyle \gamma  \ne \eta  \hfill} {\sigma _{l - 1} (D_y^2 u|\gamma \eta )u_{\gamma \eta \alpha } u_{\eta \gamma \beta } }  \notag \\
=& O(\phi ) + \sum\limits_{\scriptstyle \gamma  \in B \hfill \atop \scriptstyle \eta  \in G \hfill} {\sigma _{l - 1} (G|\eta )u_{\gamma \eta \alpha } u_{\eta \gamma \beta } }  + \sum\limits_{\scriptstyle \gamma  \in G \hfill \atop \scriptstyle \eta  \in B \hfill} {\sigma _{l - 1} (G|\gamma )u_{\gamma \eta \alpha } u_{\eta \gamma \beta } }  \notag \\
=& 2\sigma _l (G)\sum\limits_{\scriptstyle \gamma  \in B \hfill \atop  \scriptstyle \eta  \in G \hfill} {\frac{{u_{\gamma \eta \alpha } u_{\eta \gamma \beta } }}{{u_{\eta \eta } }}}  + O(\phi ).
\end{align}
So from \eqref{3.10}-\eqref{3.13}, we get
\begin{align}\label{3.14}
\phi _{\alpha \beta }  =& \sigma _l (G)\sum\limits_{\gamma  \in B} {u_{\gamma \gamma \alpha \beta } }  - 2\sigma _l (G)\sum\limits_{\scriptstyle \gamma  \in B \hfill \atop \scriptstyle \eta  \in G \hfill} {\frac{{u_{\gamma \eta \alpha } u_{\eta \gamma \beta } }}{{u_{\eta \eta } }}} \notag \\
&+ \sigma _l (G)[\sum\limits_{\eta  \in G} {\frac{{u_{\eta \eta \beta } }}{{u_{\eta \eta } }}} \sum\limits_{\gamma  \in B} {u_{\gamma \gamma \alpha } }  + \sum\limits_{\gamma  \in G} {\frac{{u_{\gamma \gamma \alpha } }}{{u_{\gamma \gamma } }}} \sum\limits_{\eta  \in B} {u_{\eta \eta \beta } } ] + O(\phi ).
\end{align}
Then we get
\begin{align}
\Delta _x \phi  =& \sum\limits_{i = 1}^n { \phi _{ii} } =\sum\limits_{i = 1}^n {\sum\limits_{\alpha \beta  = 1}^{n + 1} {P_{i\alpha } P_{i\beta } \phi _{\alpha \beta } } }   \notag\\
=& \sigma _l (G)\sum\limits_{i = 1}^n {\sum\limits_{\gamma  \in B} {[\sum\limits_{\alpha \beta  = 1}^{n + 1} {P_{i\alpha } P_{i\beta } } u_{\gamma \gamma \alpha \beta } ]} }  - 2\sigma _l (G)\sum\limits_{i = 1}^n {\sum\limits_{\scriptstyle \gamma  \in B \hfill \atop \scriptstyle \eta  \in G \hfill} {\frac{{[\sum\limits_{\alpha  = 1}^{n + 1} {P_{i\alpha } u_{\gamma \eta \alpha } } ][\sum\limits_{\beta  = 1}^{n + 1} {P_{i\beta } u_{\eta \gamma \beta } } ]}}{{u_{\eta \eta } }}} }  \notag \\
&+ \sigma _l (G)\sum\limits_{i = 1}^n {[\sum\limits_{\eta  \in G} {\frac{{\sum\limits_{\beta  = 1}^{n + 1} {P_{i\beta } } u_{\eta \eta \beta } }}{{u_{\eta \eta } }}} \sum\limits_{\gamma  \in B} {\sum\limits_{\alpha  = 1}^{n + 1} {P_{i\alpha } } u_{\gamma \gamma \alpha } }  + \sum\limits_{\gamma  \in G} {\frac{{\sum\limits_{\alpha  = 1}^{n + 1} {P_{i\alpha } } u_{\gamma \gamma \alpha } }}{{u_{\gamma \gamma } }}} \sum\limits_{\eta  \in B} {\sum\limits_{\beta  = 1}^{n + 1} {P_{i\beta } } u_{\eta \eta \beta } } ]}  + O(\phi )  \notag\\
=& \sigma _l (G)\sum\limits_{i = 1}^n {\sum\limits_{\gamma  \in B} {u_{\gamma \gamma ii} } }  - 2\sigma _l (G)\sum\limits_{i = 1}^n {\sum\limits_{\scriptstyle \gamma  \in B \hfill \atop \scriptstyle \eta  \in G \hfill} {\frac{{u_{\gamma \eta i} u_{\eta \gamma i} }}{{u_{\eta \eta } }}} }  + 2\sigma _l (G)\sum\limits_{i = 1}^n {\sum\limits_{\eta  \in G} {\frac{{u_{\eta \eta i} }}{{u_{\eta \eta } }}} \sum\limits_{\gamma  \in B} {u_{\gamma \gamma i} } }  + O(\phi ). \notag
\end{align}
By \eqref{3.9}, it holds
\begin{align}\label{3.15}
\Delta _x \phi  = \sigma _l (G)\sum\limits_{\gamma  \in B} {\left[ {\Delta _x u_{\gamma \gamma }  - 2\sum\limits_{\eta  \in G} {\sum\limits_{i = 1}^n {\frac{{u_{\gamma \eta i} ^2 }}{{u_{\eta \eta } }}} } } \right]}  + O(\phi  + |\nabla _x \phi |)
\end{align}
By \eqref{3.8}, \eqref{3.15} and the equation \eqref{3.1}, we can obtain
\begin{align}\label{3.16}
\Delta _x \phi  - \phi _t  =& \sigma _l (G)\sum\limits_{\gamma  \in B} {\left[ {\left( {\Delta _x u_{\gamma \gamma }  - u_{\gamma \gamma t} } \right) - 2\sum\limits_{\eta  \in G} {\sum\limits_{i = 1}^n {\frac{{u_{\gamma \eta i} ^2 }}{{u_{\eta \eta } }}} } } \right]}  + O(\phi  + |\nabla _x \phi |) \notag \\
=&  - 2\sigma _l (G)\sum\limits_{\gamma  \in B} {\sum\limits_{\eta  \in G} {\sum\limits_{i = 1}^n {\frac{{u_{\gamma \eta i} ^2 }}{{u_{\eta \eta } }}} } }  + O(\phi  + |\nabla _x \phi |)  \notag\\
\le& C(\phi  + |\nabla _x \phi |).
\end{align}
Then \eqref{3.5} holds, and we prove Theorem \ref{th3.1}.
\end{proof}

\begin{remark}
In the proof, we rotate the spacetime coordinates $(x, t)$ such that $D_{x,t}^2 u$ is diagonal, and the heat equation in $(x, t)$
is a new linear equation in $y$ coordinates. While,  the idea in \cite{HM} is only to rotate the spatial coordinates $x$ such that $D_{x}^2 u$ is diagonal, and keep the heat equation invariant. This proof is easier than \cite{HM}, and it can be applied to linear
parabolic equations.
\end{remark}
\begin{remark}
For fully nonlinear parabolic equations, the above test function $\phi =\sigma _{l + 1} (D_{x,t}^2 u)$ is not good enough. If we choose Bian-Guan's test
function $\phi =\sigma _{l + 1} (D_{x,t}^2 u)+ \frac{\sigma _{l +2} (D_{x,t}^2 u)}{\sigma _{l + 1} (D_{x,t}^2 u)}$ as in \cite{BG09} and rotate the spacetime coordinates $(x, t)$ such that $D_{x,t}^2 u$ is diagonal, we should use
$|\nabla_x \phi|$, not $|\nabla_y \phi |=|D_{x,t} u|$ to control the bad terms $\sum\limits_{\alpha \beta \in B} {|u_{\alpha \beta \gamma|} }$. This is very easy.
\end{remark}

\section{Proof of Theorem~\ref{th1.3}}

In this section, we will prove the constant rank theorem of spacetime Hessian for the fully nonlinear parabolic equations, Theorem \ref{th1.3}. In fact, we will use a new idea to simplify the calculations. The key is to use
the constant rank properties of the spatial Hessian $ D^2 u$.

Suppose $W(x,t)=D_{x,t}^2 u$ attains the minimal
rank $l$ at some point $(x_0, t_0) \in \Omega \times (0, T]$. We may assume
$l\leqslant n$, otherwise there is nothing to prove. From lemma
\ref{lem2.4}, we can transform the $x$ coordinates such that $D^2u(x_0,t_0)$ is diagonal with $ u_{11} \geq u_{22} \geq \cdots
\geq u_{nn} $, then at $(x_0,t_0)$, there is a positive constant $
C_0$ such that

CASE 1:
\begin{eqnarray*}
&&u_{11}  \geq \cdots \geq u_{l-1l-1} \geq
C_0 , \quad u_{ll} = \cdots = u_{nn} =0 , \\
&&u_{tt} -\sum\limits_{i =1}^{l-1} {\frac{{u_{it} ^2 }} {{u_{ii}
}}} \geq C_0 ,  \quad u_{it} = 0, \quad l \leqslant i \leqslant n.
\end{eqnarray*}
In particular, $\sigma_l(D^2u(x_0,t_0))= 0$.

CASE 2:
\begin{eqnarray*}
&&u_{11}  \geq \cdots \geq u_{ll} \geq C_0 , \quad u_{l+1l+1} =
\cdots = u_{nn} =0, \\
&& u_{tt}  = \sum\limits_{i = 1}^{l}{\frac{{u_{it} ^2 }} {{u_{ii} }}}
,  \quad u_{it} = 0, \quad  l+1 \leqslant i \leqslant n.
\end{eqnarray*}
In particular, $\sigma_l(D^2u(x_0,t_0)) > 0$.

In the following, we denote
\begin{align*}
& F^{ij}  = \frac{{\partial F}} {{\partial u_{ij} }} ,  F^{u_i } =
\frac{{\partial F}} {{\partial u_i }} ,  F^u  = \frac{{\partial F}}
{{\partial u}} ,F^t  = \frac{{\partial F}}{{\partial t}} ,\\
&F^{ij,kl}  = \frac{{\partial ^2 F}} {{\partial u_{ij} \partial
u_{kl} }},  F^{ij, u_k}  = \frac{{\partial ^2 F}} {{\partial
u_{ij}\partial u_k }} ,  F^{ij,u}  = \frac{{\partial ^2 F}}
{{\partial u_{ij}\partial u}},\\
& F^{u_i u_j }  = \frac{{\partial ^2 F}} {{\partial u_i \partial
u_j}} ,  F^{u_i, u}  = \frac{{\partial ^2 F}} {{\partial u_i
\partial u}} ,  F^{u,u}  = \frac{{\partial ^2 F}} {{\partial
u\partial u}},
\end{align*}
where $ 1 \leqslant i, j, k, l \leqslant n $.

In order to prove Theorem \ref{th1.3}, we will firstly consider the constant rank theorem of $D^2 u$,
which is all from \cite{BG09}, and state some important results. Then we prove Theorem \ref{th1.3} under the above
CASE 1 and CASE 2, respectively.

\subsection{The constant rank properties of $D^2u$}

Following the assumptions of Theorem \ref{th1.3}, we know $D^2u \geq 0$. If $F$
satisfies \eqref{1.7}, then $F$ satisfies \eqref{1.6} and Theorem \ref{th1.1} holds.
Suppose $D^2u$ attains its minimal rank $l$ at some point $(x_0, t_0) \in \Omega \times (0, T]$. We pick
a small open neighborhood $\mathcal {O}\times (t_0-\delta,
t_0]$ of $(x_0, t_0)$, and for any fixed point $(x, t) \in \mathcal {O}\times
(t_0-\delta, t_0]$, we rotate the $x$ coordinates such that the matrix $D^2 u (x,t)$ is diagonal
and without loss of generality we assume $ u_{11} \geqslant u_{22}
\geqslant \cdots \geqslant u_{nn} $. Then there is a positive
constant $C > 0$ depending only on $\left\| u \right\|_{C^{3,1} }$, such that $ u_{11} \geqslant \cdots \geqslant
u_{ll} \geqslant C > 0 $ for all $(x, t) \in \mathcal {O}\times (t_0-\delta, t_0]$.
For convenience we denote $ G = \{1, \cdots ,l\} $ and $ B =
\{ l+1, \cdots ,n\} $ which means good terms and bad ones in
indices respectively. Without confusion we will also simply denote $ G = \{ u_{11} , \cdots ,u_{ll} \} $ and $
B = \{ u_{l+1l+1} , \cdots ,u_{nn} \} $.

Set
\begin{equation}\label{4.1}
q(W) = \left\{ \begin{matrix}
  \frac{{\sigma _{l + 2} (W)}}{{\sigma _{l + 1} (W)}}, \quad if \quad\sigma _{l + 1} (W) > 0, \hfill \cr
  0, \qquad if \quad \sigma _{l + 1} (W) = 0. \hfill \cr
 \end{matrix}  \right.
\end{equation}
And denote
\begin{equation}\label{4.2}
\phi = \sigma_{l+1}(D^2u)+q(D^2u).
\end{equation}
In \cite{BG09}, Bian-Guan got the following differential inequality,
\begin{equation} \label{4.3}
\sum\limits_{ij = 1}^n {F^{ij} \phi _{ij}(x,t)-\phi_t(x,t) }
\leqslant C(\phi (x,t) + \left| {\nabla \phi(x,t) } \right|)
-C_2\sum\limits_{i,j \in B} {\left| {\nabla u_{ij} }
\right|},
\end{equation}
where $C_1$,$C_2$ are two positive constants and
$(x,t) \in \mathcal {O} \times (t_0-\delta, t_0]$. Together with
\begin{equation}\label{4.4}
\phi(x,t) \geq 0, \quad  (x,t) \in \mathcal {O} \times (t_0-\delta, t_0], \quad
\phi(x_0, t_0) =0,
\end{equation}
we can apply the strong maximum principle of parabolic equations, and we obtain
\begin{equation}\label{4.5}
\phi(x,t) \equiv 0, \quad  (x,t) \in \mathcal {O} \times (t_0-\delta, t_0],
\end{equation}
and
\begin{equation}\label{4.6}
\sum\limits_{i,j \in B} {\left| {\nabla u_{ij} }\right|}\equiv 0, \quad  (x,t) \in \mathcal {O} \times (t_0-\delta, t_0].
\end{equation}
By the argument in \cite{BG09}, the null space of $D^2u$ is parallel for all
$x \in \Omega$, $t \in (t_0-\delta, t_0]$. So we can fix $e_{l+1}, \cdots,
e_{n}$ in $(x, t) \in \Omega \times (t_0-\delta, t_0]$such that
$u_{ii}(x,t) \equiv 0$, for any $(x,t) \in \mathcal {O} \times (t_0-\delta, t_0]$ and $i  \in B$.

So we can get the following constant rank properties.
\begin{proposition} \label{prop4.1}
Under above assumptions, we can get
\begin{eqnarray}
\label{4.7}&& u_{ii}(x,t) \equiv 0, \quad \text{ for } (x,t) \in \mathcal {O} \times (t_0-\delta, t_0] \text{ and i } \in B, \\
\label{4.8}&&\sum\limits_{i,j \in B} {\left| {\nabla u_{ij} }\right|} (x,t) \equiv 0, \quad \text{ for } (x,t) \in \mathcal {O} \times (t_0-\delta, t_0].
\end{eqnarray}
\end{proposition}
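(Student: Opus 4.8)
The plan is to obtain Proposition~\ref{prop4.1} as a corollary of the spatial constant rank theorem (Theorem~\ref{th1.1}) together with a strong‑maximum‑principle argument applied to Bian--Guan's test function, as outlined in the paragraphs preceding the statement. First I would check hypotheses: since $F$ satisfies $(\ref{1.5})$ and $(\ref{1.7})$, freezing $t$ in $(\ref{1.7})$ shows $F(A^{-1},p,u,x,t)$ is locally convex in $(A,u,x)$ for every pair $(p,t)$, i.e.\ $(\ref{1.6})$ holds, so Theorem~\ref{th1.1} applies to $D^2u$: for each $t\in(0,T]$ the rank of $D^2u(\cdot,t)$ is constant on $\Omega$ and nondecreasing in $t$. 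Let $l$ be the minimal value of $\mathrm{rank}\,D^2u$ over $\Omega\times(0,T]$, attained at $(x_0,t_0)$; choose $\delta>0$ and a neighborhood $\mathcal O$ of $x_0$ so that on $\mathcal O\times(t_0-\delta,t_0]$ one diagonalizes $D^2u$ smoothly with $u_{11}\ge\cdots\ge u_{nn}$ and $u_{11},\dots,u_{ll}\ge C>0$, and set $G=\{1,\dots,l\}$, $B=\{l+1,\dots,n\}$.

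With $q$ as in $(\ref{4.1})$ and $\phi=\sigma_{l+1}(D^2u)+q(D^2u)$ as in $(\ref{4.2})$, the next step is to invoke the differential inequality $(\ref{4.3})$ of \cite{BG09}. I would merely indicate its provenance rather than reprove it: one differentiates $\phi$ twice in $x$, substitutes the twice‑differentiated equation $(\ref{1.4})$, uses Propositions~\ref{prop2.2} and~\ref{prop2.3} for the derivatives of $\sigma_{l+1}$, the positivity $Q^*\ge0$ of Lemma~\ref{lem2.5} to control the second‑order (concavity) terms, and the gradient estimate of Lemma~\ref{lem2.6} to absorb the cubic terms carrying bad indices.

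To conclude, note that $D^2u\ge0$ gives $\sigma_{l+1}(D^2u)\ge0$ and $q(D^2u)\ge0$, so $\phi\ge0$ on $\mathcal O\times(t_0-\delta,t_0]$, and $\phi(x_0,t_0)=0$ because $\mathrm{rank}\,D^2u(x_0,t_0)=l$ forces $\sigma_{l+1}=\sigma_{l+2}=0$ there; this is $(\ref{4.4})$. Then $(\ref{4.3})$ exhibits $\phi\ge0$ as a subsolution of $\sum_{ij=1}^n F^{ij}\phi_{ij}-\phi_t-C|\nabla\phi|-C\phi\le0$, and the lower‑order term $|\nabla\phi|$ is handled by the version of the strong maximum principle used in \cite{BG09}, whence $\phi\equiv0$, i.e.\ $(\ref{4.5})$. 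Plugging $\phi\equiv0$ back into $(\ref{4.3})$ forces $\sum_{i,j\in B}|\nabla u_{ij}|\equiv0$, which is $(\ref{4.8})$. Finally $\phi\equiv0$ gives $\sigma_{l+1}(D^2u)\equiv0$, so $\mathrm{rank}\,D^2u\le l$ on the neighborhood; since it equals the constant $l$, exactly the $n-l$ smallest eigenvalues vanish identically, and $(\ref{4.8})$ shows the corresponding null space does not move, so by the argument of \cite{BG09} one fixes directions $e_{l+1},\dots,e_n$ with $D^2u\,e_i=0$, i.e.\ $u_{ii}\equiv0$ for $i\in B$, which is $(\ref{4.7})$.

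The main point requiring care is the last of these steps, namely invoking the strong maximum principle in the presence of the $|\nabla\phi|$ term in $(\ref{4.3})$; this is standard (it is a first‑order lower‑order perturbation, handled by the Bony‑type strong maximum principle already used in \cite{BG09}). Since the genuinely hard analytic input---the inequality $(\ref{4.3})$---is borrowed verbatim from \cite{BG09}, there is essentially no new obstacle in this proposition; its role here is purely to package the spatial constant rank properties that will be fed into the spacetime argument in the remainder of the section.
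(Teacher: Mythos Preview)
Your proposal is correct and follows essentially the same route as the paper: the paragraphs preceding Proposition~\ref{prop4.1} \emph{are} its proof, and they consist precisely of (i) noting that $(\ref{1.7})$ implies $(\ref{1.6})$ so that the spatial constant rank theorem of \cite{BG09} applies, (ii) invoking Bian--Guan's test function $\phi$ and the differential inequality $(\ref{4.3})$, (iii) applying the strong maximum principle to obtain $(\ref{4.5})$--$(\ref{4.6})$, and (iv) using the parallel null space argument of \cite{BG09} to fix $e_{l+1},\dots,e_n$. One small wording point: the diagonalization of $D^2u$ is done pointwise by rotating the $x$-coordinates at each fixed $(x,t)$, not by a single smooth change of frame on $\mathcal O\times(t_0-\delta,t_0]$.
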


\subsection{CASE 1}

In this subsection, we will prove Theorem \ref{th1.3} under CASE 1.
Suppose $W(x,t)=D_{x,t}^2 u$ attains the minimal
rank $l$ at some point $(x_0, t_0) \in \Omega \times (0, T]$. We may assume
$l\leqslant n$, otherwise there is nothing to prove. Then from lemma
\ref{lem2.4}, there is a neighborhood $\mathcal {O}\times
(t_0-\delta, t_0]$ of $(x_0, t_0)$, such that  $ u_{11} \geq
\cdots \geq u_{l-1l-1} \geqslant C > 0 $ and $u_{tt} -\sum\limits_{i
= 1}^{l-1} {\frac{{u_{it} ^2 }} {{u_{ii} }}} \geq C$ for all $(x,t)
\in \mathcal {O}\times (t_0-\delta, t_0]$. And for any fixed
point  $(x,t) \in \mathcal {O}\times (t_0-\delta, t_0]$, we
rotate the $x$ coordinate such that the matrix $D^2
u$ is diagonal, and without loss of generality we assume $ u_{11}
\geq u_{22}\geq \cdots \geq u_{nn} $. We can denote $ G = \{ 1, \cdots , l-1 \} $ and $ B = \{ l,
\cdots, n\}$ .

In order to prove the main theorem, we just need to prove
\begin{equation} \label{4.9}
\sigma_{l+1}(D_{x,t}^2 u)\equiv 0, \quad \text{for every}\quad
(x,t) \in \mathcal {O} \times (t_0-\delta, t_0].
\end{equation}
In fact, when $D^2 u$ is diagonal at $(x,t)$, we have
\begin{align} \label{4.10}
\sigma_{l+1}(D_{x,t}^2 u)=&\sigma_{l+1}(D^2 u)+ u_{tt} \sigma_{l}(D^2 u) - \sum_{i=1}^n u_{it}^2 \sigma_{l-1}(D^2 u|i) \notag \\
\leq& \sigma_{l+1}(D^2 u)+ u_{tt} \sigma_{l}(D^2 u).
\end{align}

Under CASE 1, the spatial Hessian $D^2u$ attains the rank $l-1$.
From \cite{BG09}, the constant rank theorem holds for the spatial
Hessian $D^2u$ of the solution $u$ for the equation $
u_t=F(D^2u,Du,u,x,t)$, so we can get,
\begin{equation}\label{4.11}
\sigma_{l+1}(D^2u)=\sigma_{l}(D^2u)\equiv 0, \quad \text{for
every}\quad (x,t) \in \mathcal {O} \times (t_0-\delta, t_0].
\end{equation}
Then
\begin{align} \label{4.12}
0 \leq \sigma_{l+1}(D_{x,t}^2 u)\leq \sigma_{l+1}(D^2 u)+ u_{tt} \sigma_{l}(D^2 u) =0.
\end{align}
Hence \eqref{4.9} holds.

By the continuity method, Theorem \ref{th1.3} holds under CASE 1.

\subsection{CASE 2}

In this subsection, we will prove Theorem \ref{th1.3} under CASE 2.
Suppose $W(x,t)=D_{x,t}^2 u$ attains the minimal
rank $l$ at some point $(x_0, t_0) \in \Omega \times (0, T]$. We may assume
$l\leqslant n$, otherwise there is nothing to prove. Under CASE 2, $l$ is also the minimal rank of $D^2u$ in $\Omega \times (t_0-\delta, t_0]$,
 we obtain from the discussions in Subsection 4.1, we can fix $e_{l+1}, \cdots,
e_{n}$ in $(x, t) \in \mathcal {O} \times (t_0-\delta, t_0]$ such that
$u_{ii}(x,t) \equiv 0$, for any $(x,t) \in \mathcal {O} \times (t_0-\delta, t_0]$ and $i =l+1, \cdots, n$.
For each fixed $(x,t)\in\mathcal{O}\times(t_0-\delta, t_0]$,
we choose the coordinates $\bar e_{1},\cdots,\bar e_l, \bar e_{n+1}$ so that $D_{x,t}^2 u$
is diagonal in the coordinates $\{\bar e_{1},\cdots,\bar e_l, e_{l+1}, \cdots,
e_{n}, \bar e_{n+1}\}$ . In fact, the new coordinate is
\begin{align}\label{4.13}
 y = (y_1 , \cdots ,y_n ,y_{n + 1} ) = (x,t)P,
\end{align}
where $P$ is an orthornormal matrix with
\begin{align}\label{4.14}
P = \left( {P_{\alpha \beta } } \right)_{n + 1 \times n + 1}  = \left( {\begin{array}{*{20}c}
   {P_{11} } &  \cdots  & {P_{1l} } & 0 &  \cdots  & 0 & {P_{1n + 1} }  \\
    \vdots  &  \ddots  &  \vdots  &  \vdots  &  \ddots  &  \vdots  &  \vdots   \\
   {P_{l1} } &  \cdots  & {P_{ll} } & 0 &  \cdots  & 0 & {P_{l n  + 1} }  \\
   0 &  \cdots  & 0 & 1 &  \cdots  & 0 & 0  \\
    \vdots  &  \ddots  &  \vdots  &  \vdots  &  \ddots  &  \vdots  &  \vdots   \\
   0 &  \cdots  & 0 & 0 &  \cdots  & 1 & 0  \\
   {P_{n + 11} } &  \cdots  & {P_{n + 1l} } & 0 &  \cdots  & 0 & {P_{n + 1n + 1} }  \\
\end{array}} \right).
\end{align}
Without loss of generality, we can assume $ \frac{\partial ^2 u} {\partial y_1 \partial y_1} \geqslant \cdots \geqslant
\frac{\partial ^2 u} {\partial y_l \partial y_l} \geqslant C > 0 $ for all $(x, t) \in \mathcal {O}\times (t_0-\delta, t_0]$, where the positive
constant $C > 0$ depending only on $\left\| u \right\|_{C^{3,1} }$.
For convenience we denote $ G = \{1, \cdots , l\} $ and $ B =
\{ l+1, \cdots ,n\} $ which means good terms and bad ones in
indices respectively. Without confusion we will also simply denote $ G = \{ \frac{\partial ^2 u} {\partial y_1 \partial y_1}, \cdots ,\frac{\partial ^2 u} {\partial y_l \partial y_l} \} $ and $
B = \{ \frac{\partial ^2 u} {\partial y_{l+1} \partial y_{l+1}} , \cdots ,\frac{\partial ^2 u} {\partial y_n \partial y_n}\} $.

For simplify, we will use $i,j,k,l =1, \cdots, n$ to represent the $x$ coordinates, $t$ still the time coordinate, and
 $\alpha, \beta, \gamma, \eta =1, \cdots, n+1$ the $y$ coordinates. And we have
\begin{align}
\label{4.15}&\frac{{\partial y_\alpha  }}{{\partial x_i }} = P_{i\alpha }  \\
\label{4.16}&\frac{{\partial y_\alpha  }}{{\partial t}} = P_{n + 1\alpha }
\end{align}
In the following, we always denote
\begin{align*}
& u_{i}=\frac{{\partial u}}{{\partial x_i }}, u_{t}=\frac{{\partial u}}{{\partial t }} , u_{\alpha}=\frac{{ \partial u}}{{\partial y_\alpha}},  u_{n+1}=\frac{{ \partial u}}{{\partial y_{n+1}}},\\
& u_{ij}=\frac{{\partial^2 u}}{{\partial x_i \partial x_j}}, u_{it}=\frac{{\partial^2 u}}{{\partial x_i \partial t }}, u_{tt}=\frac{{\partial^2 u}}{{ \partial t ^2 }}, u_{i\alpha}=\frac{{\partial^2 u}}{{\partial x_i \partial y_\alpha}}, \\
&u_{\alpha t}=\frac{{\partial^2 u}}{{\partial y_\alpha \partial t}},u_{\alpha \beta}=\frac{{ \partial^2 u}}{{\partial y_\alpha \partial y_\beta}}, \text{ etc. }
\end{align*}

From the discussion in Subsection 4.1,
\begin{align}\label{4.17}
 u_{\alpha \alpha }= \frac{\partial ^2 u} {\partial y_{\alpha} \partial y_{\alpha}} = \frac{\partial ^2 u} {\partial x_{\alpha} \partial x_{\alpha}}= 0,  \quad \forall \alpha  \in B.
\end{align}
Set
\begin{align}\label{4.18}
\phi  = \sigma _{l + 1} (D_{x,t}^2 u) ,
\end{align}
In the following, we will prove a differential inequality
\begin{align}\label{4.19}
\sum_{ij=1}^n F^{ij} \phi_{ij}  - \phi _t \le  C(\phi  + |\nabla _x \phi |) \quad \text{ in } \mathcal {O}\times (t_0-\delta,
t_0].
\end{align}
Then by the strong maximum principle and the method of continuity, we can prove Theorem \ref{th1.3} under CASE 2.

In the $y$ coordinates, we have from \eqref{4.17}
\begin{align*}
\phi  = \sigma _{l + 1} (D_{x,t}^2 u) = \sigma _{l + 1} (D_y^2 u) = \sigma _l (G)u_{y_{n + 1}y_{n + 1}}  \ge 0,
\end{align*}
so we have
 \begin{align}\label{4.20}
u_{y_{n + 1}y_{n + 1}}  = O(\phi ).
  \end{align}
Taking the first derivatives of $\phi$ in $x$, we have
\begin{align}
\phi _i  =& \frac{{\partial \phi }}{{\partial x_i }} = \sum\limits_{\alpha  = 1}^{n + 1} {\sigma _l (D_y^2 u|\alpha )u_{\alpha \alpha i} } \notag \\
=& \sum\limits_{\alpha  \in G} {\sigma _l (D_y^2 u|\alpha )u_{\alpha \alpha i} }  + \sum\limits_{\alpha  \in B} {\sigma _l (D_y^2 u|\alpha )u_{\alpha \alpha i} }  + \sum\limits_{\alpha  = n + 1} {\sigma _l (D_y^2 u|\alpha )u_{\alpha \alpha i} }  \notag\\
=& \sigma _l (G)u_{y_{n + 1}y_{n + 1} x_i}  + O(\phi ), \notag
\end{align}
so
\begin{align}\label{4.21}
u_{y_{n + 1}y_{n + 1} x_i} = O(\phi +|\nabla_x \phi|),
\end{align}
Similarly, taking the first derivatives of $\phi$ in $t$, we have
\begin{align}\label{4.22}
\phi _t  = \frac{{\partial \phi }}{{\partial t}} = \sum\limits_{\alpha  = 1}^{n + 1} {\sigma _l (D_y^2 u|\alpha )u_{\alpha \alpha t} }  = \sigma _l (G)u_{y_{n + 1}y_{n + 1}t}  + O(\phi )
\end{align}
Taking the second derivatives of $\phi$ in $y$ coordinates, we have
\begin{align}\label{4.23}
\phi _{\alpha \beta }  =& \frac{{\partial ^2 \phi }}{{\partial y_\alpha  \partial y_\beta  }} \notag \\
=& \sum\limits_{\gamma  = 1}^{n + 1} {\frac{{\partial \sigma _{l + 1} (D_y^2 u)}}{{\partial u_{\gamma \gamma } }}u_{\gamma \gamma \alpha \beta } }  + \sum\limits_{\gamma  \ne \eta } {\frac{{\partial ^2 \sigma _{l + 1} }}{{\partial u_{\gamma \gamma } \partial u_{\eta \eta } }}u_{\gamma \gamma \alpha } u_{\eta \eta \beta } }  + \sum\limits_{\gamma  \ne \eta } {\frac{{\partial ^2 \sigma _{l + 1} }}{{\partial u_{\gamma \eta } \partial u_{\eta \gamma } }}u_{\gamma \eta \alpha } u_{\eta \gamma \beta } } \notag \\
=& \sum\limits_{\gamma  = 1}^{n + 1} {\sigma _l (D_y^2 u|\gamma )u_{\gamma \gamma \alpha \beta } }  + \sum\limits_{\gamma  \ne \eta } {\sigma _{l - 1} (D_y^2 u|\gamma \eta )u_{\gamma \gamma \alpha } u_{\eta \eta \beta } }  - \sum\limits_{\gamma  \ne \eta } {\sigma _{l - 1} (D_y^2 u|\gamma \eta )u_{\gamma \eta \alpha } u_{\eta \gamma \beta } }
\end{align}
where
\begin{align}\label{4.24}
\sum\limits_{\gamma  = 1}^{n + 1} {\sigma _l (D_y^2 u|\gamma )u_{\gamma \gamma \alpha \beta } }  =& \sum\limits_{\gamma  \in G} {\sigma _l (D_y^2 u|\gamma )u_{\gamma \gamma \alpha \beta } }  + \sum\limits_{\gamma  = n + 1} {\sigma _l (D_y^2 u|\gamma )u_{\gamma \gamma \alpha \beta } }  \notag\\
=& \sigma _l (G)u_{n + 1n + 1\alpha \beta }  + O(\phi ),
\end{align}
\begin{align}\label{4.25}
\sum\limits_{\gamma  \ne \eta } {\sigma _{l - 1} (D_y^2 u|\gamma \eta )u_{\gamma \gamma \alpha } u_{\eta \eta \beta } }  =& \sum\limits_{\scriptstyle \gamma \eta  \in G \hfill \atop
\scriptstyle \gamma  \ne \eta  \hfill} {\sigma _{l - 1} (D_y^2 u|\gamma \eta )u_{\gamma \gamma \alpha } u_{\eta \eta \beta } }  + \sum\limits_{\scriptstyle \gamma  = n + 1 \hfill \atop
\scriptstyle \eta  \in G \hfill} {\sigma _{l - 1} (D_y^2 u|\gamma \eta )u_{\gamma \gamma \alpha } u_{\eta \eta \beta } }  \notag\\
&+ \sum\limits_{\scriptstyle \gamma  \in G \hfill \atop
\scriptstyle \eta  = n + 1 \hfill} {\sigma _{l - 1} (D_y^2 u|\gamma \eta )u_{\gamma \gamma \alpha } u_{\eta \eta \beta } }  \notag\\
=& O(\phi ) + \sum\limits_{\eta  \in G} {\sigma _{l - 1} (G|\eta )u_{\eta \eta \beta } } u_{n + 1n + 1\alpha }  + \sum\limits_{\gamma  \in G} {\sigma _{l - 1} (G|\gamma )u_{\gamma \gamma \alpha } } u_{n + 1n + 1\beta }  \notag\\
=& \sigma _l (G)[\sum\limits_{\eta  \in G} {\frac{{u_{\eta \eta \beta } }}{{u_{\eta \eta } }}} u_{n + 1n + 1\alpha }  + \sum\limits_{\gamma  \in G}{\frac{{u_{\gamma \gamma \alpha } }}{{u_{\gamma \gamma } }}} u_{n + 1n + 1\beta } ] + O(\phi ),
\end{align}
and
\begin{align}\label{4.26}
\sum\limits_{\gamma  \ne \eta } {\sigma _{l - 1} (D_y^2 u|\gamma \eta )u_{\gamma \eta \alpha } u_{\eta \gamma \beta } }  =& \sum\limits_{\scriptstyle \gamma \eta  \in G \hfill \atop
\scriptstyle \gamma  \ne \eta  \hfill} {\sigma _{l - 1} (D_y^2 u|\gamma \eta )u_{\gamma \eta \alpha } u_{\eta \gamma \beta } }  + \sum\limits_{\scriptstyle \gamma  = n + 1 \hfill \atop
\scriptstyle \eta  \in G \hfill} {\sigma _{l - 1} (D_y^2 u|\gamma \eta )u_{\gamma \eta \alpha } u_{\eta \gamma \beta } } \notag \\
&+ \sum\limits_{\scriptstyle \gamma  \in G \hfill \atop
\scriptstyle \eta  = n + 1 \hfill} {\sigma _{l - 1} (D_y^2 u|\gamma \eta )u_{\gamma \eta \alpha } u_{\eta \gamma \beta } } \notag \\
=& O(\phi ) + \sum\limits_{\eta  \in G} {\sigma _{l - 1} (G|\eta )u_{n + 1\eta \alpha } u_{\eta n + 1\beta } }  + \sum\limits_{\gamma  \in G} {\sigma _{l - 1} (G|\gamma )u_{\gamma n + 1\alpha } u_{n + 1\gamma \beta } } \notag \\
=& 2\sigma _l (G)\sum\limits_{\eta  \in G} {\frac{{u_{n + 1\eta \alpha } u_{\eta n + 1\beta } }}{{u_{\eta \eta } }}}  + O(\phi ).
\end{align}
So we have
\begin{align}\label{4.27}
\phi _{\alpha \beta }  =& \sigma _l (G)u_{n + 1n + 1\alpha \beta }  - 2\sigma _l (G)\sum\limits_{\eta  \in G} {\frac{{u_{n + 1\eta \alpha } u_{\eta n + 1\beta } }}{{u_{\eta \eta } }}}  \notag \\
&+ \sigma _l (G)[\sum\limits_{\eta  \in G} {\frac{{u_{\eta \eta \beta } }}{{u_{\eta \eta } }}} u_{n + 1n + 1\alpha }  + \sum\limits_{\gamma  \in G} {\frac{{u_{\gamma \gamma \alpha } }}{{u_{\gamma \gamma } }}} u_{n + 1n + 1\beta } ] + O(\phi ).
\end{align}
Then
\begin{align}\label{4.28}
\sum\limits_{ij = 1}^n {F^{ij} \phi _{ij} }  =& \sum\limits_{ij = 1}^n {F^{ij} \sum\limits_{\alpha \beta  = 1}^{n + 1} {P_{i\alpha } P_{j\beta } \phi _{\alpha \beta } } } \notag \\
=& \sigma _l (G)\sum\limits_{ij = 1}^n {F^{ij} \sum\limits_{\alpha \beta  = 1}^{n + 1} {P_{i\alpha } P_{j\beta } } u_{n + 1n + 1\alpha \beta } }  - 2\sigma _l (G)\sum\limits_{ij = 1}^n {F^{ij} \sum\limits_{\eta  \in G} {\frac{{[\sum\limits_{\alpha  = 1}^{n + 1} {P_{i\alpha } u_{n + 1\eta \alpha } } ][\sum\limits_{\beta  = 1}^{n + 1} {P_{j\beta } u_{\eta n + 1\beta } } ]}}{{u_{\eta \eta } }}} }  \notag\\
&+ \sigma _l (G)\sum\limits_{ij = 1}^n {F^{ij} [\sum\limits_{\eta  \in G} {\frac{{\sum\limits_{\beta  = 1}^{n + 1} {P_{j\beta } } u_{\eta \eta \beta } }}{{u_{\eta \eta } }}} \sum\limits_{\alpha  = 1}^{n + 1} {P_{i\alpha } u_{n + 1n + 1\alpha } }  + \sum\limits_{\gamma  \in G} {\frac{{\sum\limits_{\alpha  = 1}^{n + 1} {P_{i\alpha } } u_{\gamma \gamma \alpha } }}{{u_{\gamma \gamma } }}} \sum\limits_{\beta  = 1}^{n + 1} {P_{j\beta } u_{n + 1n + 1\beta } } ]}  + O(\phi ) \notag\\
=& \sigma _l (G)\sum\limits_{ij = 1}^n {F^{ij} u_{n + 1n + 1ij} }  - 2\sigma _l (G)\sum\limits_{ij = 1}^n {F^{ij} \sum\limits_{\eta  \in G} {\frac{{u_{n + 1\eta i} u_{\eta n + 1j} }}{{u_{\eta \eta } }}} }  \notag\\
&+ \sigma _l (G)\sum\limits_{ij = 1}^n {F^{ij} [\sum\limits_{\eta  \in G} {\frac{{u_{\eta \eta j} }}{{u_{\eta \eta } }}} u_{n + 1n + 1i}  + \sum\limits_{\gamma  \in G} {\frac{{u_{\gamma \gamma i} }}{{u_{\gamma \gamma } }}} u_{n + 1n + 1j} ]}  + O(\phi )
\end{align}
By \eqref{4.21}, we have
\begin{align}\label{4.29}
 \sum\limits_{ij = 1}^n {F^{ij} \phi _{ij} }  = \sigma _l (G)\left[ {\sum\limits_{ij = 1}^n {F^{ij} u_{n + 1n + 1ij} }  - 2\sum\limits_{\eta  \in G} {\sum\limits_{ij = 1}^n {F^{ij} } \frac{{u_{n + 1\eta i} u_{\eta n + 1j} }}{{u_{\eta \eta } }}} } \right] + O(\phi  + |\nabla _x \phi |)
 \end{align}
From \eqref{4.22} and \eqref{4.29}, we have
\begin{align}\label{4.30}
 \sum\limits_{ij = 1}^n {F^{ij} \phi _{ij} }  - \phi _t  =& \sigma _l (G)\big[ {( {\sum\limits_{ij = 1}^n {F^{ij} u_{n + 1n + 1ij} }  - u_{n + 1n + 1t} } ) - 2\sum\limits_{\eta  \in G} {\sum\limits_{ij = 1}^n {F^{ij} } \frac{{u_{n + 1\eta i} u_{\eta n + 1j} }}{{u_{\eta \eta } }}} } \big] \notag\\
 &+ O(\phi  + |\nabla _x \phi |)
 \end{align}

For the first term in the right hand side of \eqref{4.30}, we use the equation
\begin{align*}
 u_t  = F(\nabla ^2 u,\nabla u,u,x,t)
 \end{align*}
Taking the second derivative in $y_{n + 1}$, we have
\begin{align}\label{4.31}
u_{n + 1n + 1 t}  =& \sum\limits_{ij = 1}^n {F^{ij} } u_{n + 1 n + 1 ij}  + \sum\limits_{i = 1}^n {F^{u_i } } u_{n + 1 n + 1 i}  + F^u u_{n + 1 n + 1 }  \notag\\
&+ \sum\limits_{ijkl = 1}^n {F^{ij,kl} u_{ij\gamma } u_{kl\gamma } }  + 2\sum\limits_{ijk = 1}^n {F^{ij,u_k } } u_{ij\gamma } u_{k\gamma }  + 2\sum\limits_{ij = 1}^n {F^{ij,u} } u_{ijn + 1 } u_{n + 1} \notag\\
& + 2\sum\limits_{ijk = 1}^n {F^{ij,x_k } } u_{ijn + 1 } \frac{{\partial x_k }}{{\partial y_{n + 1}  }} + 2\sum\limits_{ij = 1}^n {F^{ij,t} } u_{ijn + 1} \frac{{\partial t}}{{\partial y_{n + 1} }}+ \sum\limits_{ij = 1}^n {F^{u_i ,u_j } } u_{in + 1} u_{jn + 1 } \notag\\
& + 2\sum\limits_{i = 1}^n {F^{u_i ,u} } u_{in + 1 } u_{n + 1}    + 2\sum\limits_{ik = 1}^n {F^{u_i ,x_k } } u_{in + 1 } \frac{{\partial x_k }}{{\partial y_{n + 1}  }}+ 2\sum\limits_{i = 1}^n {F^{u_i ,t} } u_{in + 1 } \frac{{\partial t}}{{\partial y_{n + 1}  }} \notag\\
&+ F^{u,u} u_{n + 1} ^2   + 2\sum\limits_{k = 1}^n {F^{u,x_k } } u_{n + 1} \frac{{\partial x_k }}{{\partial y_{n + 1}  }} + 2F^{u,t} u_{n + 1}  \frac{{\partial t}}{{\partial y_{n + 1}  }} \notag\\
&+ \sum\limits_{ik = 1}^n {F^{x_i ,x_k } } \frac{{\partial x_i }}{{\partial y_{n + 1} }}\frac{{\partial x_k }}{{\partial y_{n + 1} }}+ 2\sum\limits_{i = 1}^n {F^{x_i ,t} } \frac{{\partial x_i }}{{\partial y_{n + 1}  }}\frac{{\partial t}}{{\partial y_{n + 1} }} + F^{t,t} \left( {\frac{{\partial t}}{{\partial y_{n + 1} }}} \right)^2
\end{align}
From \eqref{4.17}, \eqref{4.20} and \eqref{4.21}, we have
\begin{align}
\label{4.32}&u_{n + 1 n + 1 i}  = O(\phi  + |\nabla _x \phi |), \forall i=1, \cdots, n;\\
\label{4.33}&u_{\alpha \alpha }  =0, \forall \alpha  \in B;  u_{n + 1 n + 1 }  = O(\phi ),\\
\label{4.34}&u_{in + 1 }  = \frac{{\partial u_{n + 1}  }}{{\partial x_i }} = \sum\limits_{\eta  = 1}^{n + 1} {\frac{{\partial u_{n + 1}   }}{{\partial y_\eta  }}\frac{{\partial y_\eta  }}{{\partial x_i }}}  = \sum\limits_{\eta  = 1}^{n + 1} {u_{n + 1  \eta } P_{i\eta } }  = u_{n + 1 n + 1 } P_{in + 1 }  = O(\phi ).
\end{align}
And from \eqref{4.17} and Lemma \ref{lem2.6}, we have for $i \text{ or } j \in B$
\begin{align}
|u_{ijn + 1  } | \leq C ( u_{ii} u_{jj})^{\frac{1}{4}} =  0, \notag
\end{align}
so we have
\begin{align}\label{4.35}
u_{ijn + 1  }  =  0.
\end{align}
Then
\begin{align}\label{4.36}
&u_{n + 1 n + 1 t}  - \sum\limits_{ij = 1}^n {F^{ij} } u_{n + 1 n + 1 ij}  \notag \\
=& \sum\limits_{ijkl = 1}^n {F^{ij,kl} u_{ijn + 1} u_{kln + 1} }  + 2\sum\limits_{ij = 1}^n {F^{ij,u} } u_{ijn + 1 } u_{n + 1} + 2\sum\limits_{ijk = 1}^n {F^{ij,x_k } } u_{ijn + 1} \frac{{\partial x_k }}{{\partial y_{n + 1}  }}\notag\\
& + 2\sum\limits_{ij = 1}^n {F^{ij,t} } u_{ijn + 1} \frac{{\partial t}}{{\partial y_{n + 1} }} + F^{u,u} u_{n + 1}^2  + 2\sum\limits_{k = 1}^n {F^{u,x_k } } u_{n + 1} \frac{{\partial x_k }}{{\partial y_{n + 1} }} + 2F^{u,t} u_{n + 1}  \frac{{\partial t}}{{\partial y_{n + 1} }} \notag \\
&+ \sum\limits_{ik = 1}^n {F^{x_i ,x_k } } \frac{{\partial x_i }}{{\partial y_{n + 1} }}\frac{{\partial x_k }}{{\partial y_{n + 1}}} + 2\sum\limits_{i = 1}^n {F^{x_i ,t} } \frac{{\partial x_i }}{{\partial y_{n + 1}  }}\frac{{\partial t}}{{\partial y_{n + 1} }} + F^{t,t} \left( {\frac{{\partial t}}{{\partial y_{n + 1} }}} \right)^2  + O(\phi )\notag\\
=& \sum\limits_{ijkl \in G} {F^{ij,kl} u_{ijn + 1 } u_{kln + 1} }  + 2\sum\limits_{ij \in G} {F^{ij,u} u_{ijn + 1 } u_{n + 1}  }  + 2\sum\limits_{ij \in G} {F^{ij,x_k } u_{ijn + 1 } \frac{{\partial x_k }}{{\partial y_{n + 1}  }}}  \notag\\
&+ 2\sum\limits_{ij \in G} {F^{ij,t} u_{ijn + 1 } \frac{{\partial t}}{{\partial y_{n + 1} }}}  + F^{u,u} u_{n + 1}  ^2  + 2\sum\limits_{k = 1}^n {F^{u,x_k } } u_{n + 1} \frac{{\partial x_k }}{{\partial y_{n + 1} }} + 2F^{u,t} u_{n + 1}  \frac{{\partial t}}{{\partial y_{n + 1} }} \notag \\
&+ \sum\limits_{ik = 1}^n {F^{x_i ,x_k } } \frac{{\partial x_i }}{{\partial y_{n + 1} }}\frac{{\partial x_k }}{{\partial y_{n + 1} }} + 2\sum\limits_{i = 1}^n {F^{x_i ,t} } \frac{{\partial x_i }}{{\partial y_{n + 1} }}\frac{{\partial t}}{{\partial y_{n + 1} }} + F^{t,t} \left( {\frac{{\partial t}}{{\partial y_{n + 1}  }}} \right)^2+O(\phi  + |\nabla _x \phi |).
\end{align}

For the second part in the right hand side of \eqref{4.30}, we have the following CLAIM:
\begin{align}\label{4.37}
 \sum\limits_{\eta  \in G} {\sum\limits_{ij = 1}^n {F^{ij} } \frac{{u_{n + 1\eta i} u_{\eta n + 1j} }}{{u_{\eta \eta } }}}  \ge \sum\limits_{kl \in G} {\sum\limits_{ij \in G} {F^{ij} u_{n + 1ki} u_{n + 1lj} u^{kl} } }+ O(\phi  + |\nabla _x \phi |)
\end{align}

If the CLAIM holds, denote
\begin{align}\label{4.38}
Q=& \sum\limits_{ijkl \in G} {F^{ij,kl} u_{ijn + 1} u_{kln + 1} } +\sum\limits_{kl \in G} {\sum\limits_{ij \in G} {F^{ij} u_{n + 1ki} u_{n + 1lj} u^{kl} } }  \notag\\
&+ 2\sum\limits_{ij \in G} {F^{ij,u} u_{ijn + 1 } u_{n + 1}  }  + 2\sum\limits_{ij \in G} {F^{ij,x_k } u_{ijn + 1 } \frac{{\partial x_k }}{{\partial y_{n + 1}  }}}  + 2\sum\limits_{ij \in G} {F^{ij,t} u_{ijn + 1 } \frac{{\partial t}}{{\partial y_{n + 1} }}} \notag\\
& + F^{u,u} u_{n + 1}  ^2  + 2\sum\limits_{k = 1}^n {F^{u,x_k } } u_{n + 1} \frac{{\partial x_k }}{{\partial y_{n + 1} }} + 2F^{u,t} u_{n + 1}  \frac{{\partial t}}{{\partial y_{n + 1} }} \notag \\
&+ \sum\limits_{ik = 1}^n {F^{x_i ,x_k } } \frac{{\partial x_i }}{{\partial y_{n + 1} }}\frac{{\partial x_k }}{{\partial y_{n + 1} }} + 2\sum\limits_{i = 1}^n {F^{x_i ,t} } \frac{{\partial x_i }}{{\partial y_{n + 1} }}\frac{{\partial t}}{{\partial y_{n + 1} }} + F^{t,t} \left( {\frac{{\partial t}}{{\partial y_{n + 1}  }}} \right)^2.
\end{align}
By the structural condition \eqref{1.7} ( that is Lemma \ref{lem2.5}), we have
\begin{align}\label{4.39}
Q \geq 0.
\end{align}
Then by \eqref{4.30}, \eqref{4.36}, \eqref{4.37} and \eqref{4.39}, we have
\begin{align}\label{4.40}
 \sum\limits_{ij = 1}^n {F^{ij} \phi _{ij} }  - \phi _t \leq & -\sigma _l (G) Q+ O(\phi  + |\nabla _x \phi |) \leq C(\phi  + |\nabla _x \phi |).
 \end{align}
So \eqref{4.19} holds, and Theorem \ref{th1.3} holds under CASE 2.
\qed

\subsection{ Proof of the CLAIM \eqref{4.37}}
Now we give the proof of the CLAIM \eqref{4.37} as follows.

First, we consider a special case: $ F^{ij}= \delta_{ij}$.
That is, we need to prove
\begin{align}\label{4.41}
 \sum\limits_{\eta  \in G} {\sum\limits_{i = 1}^n  \frac{{u_{n + 1\eta i} ^2}}{{u_{\eta \eta } }}}  \ge \sum\limits_{kl \in G} {\sum\limits_{i \in G} { u_{n + 1ki} u_{n + 1li} u^{kl} } }+ O(\phi  + |\nabla _x \phi |).
\end{align}
Form \eqref{4.35} and \eqref{4.21}, we have
\begin{align*}
& u_{n + 1\eta i}  = 0, \eta  \in B \text{ or } i \in B, \\
& u_{n + 1n + 1i}  = O(\phi  + |\nabla _x \phi |).
\end{align*}
Since $D_y^2 u$ is diagonal, by the approximation, we have for $i \in G$
\begin{align}
 \sum\limits_{\eta  \in G} {\frac{{u_{n + 1\eta i} u_{\eta n + 1i} }}{{u_{\eta \eta } }}}  = \mathop {\lim }\limits_{\varepsilon  \to 0 + } (D_y u_{n + 1i} )\left( {D_y^2 u + \varepsilon I} \right)^{ - 1} (D_y u_{n + 1i} )^T  + O(\phi  + |\nabla _x \phi |),
 \end{align}
where
\begin{align}
(D_y u_{n + 1i} )\left( {D_y^2 u + \varepsilon I} \right)^{ - 1} (D_y u_{n + 1i} )^T  =& (D_{x,t} u_{n + 1i} )P^T \left( {D_y^2 u + \varepsilon I} \right)^{ - 1} P(D_{x,t} u_{n + 1i} )^T  \notag \\
=& (D_{x,t} u_{n + 1i} )\left( {D_{x,t}^2 u + \varepsilon I} \right)^{ - 1} (D_{x,t} u_{n + 1i} )^T.
\end{align}

Denote
\begin{align}
 C := u_{tt}  + \varepsilon  - \sum\limits_{i = 1}^l {\frac{{u_{x_i t} ^2 }}{{u_{x_i x_i }  + \varepsilon }}}  > 0,
 \end{align}
then
\begin{align}
\left( {D_{x,t}^2 u + \varepsilon I} \right)^{ - 1}  =& diag(\frac{1}{{u_{x_1 x_1 }  + \varepsilon }}, \cdots ,\frac{1}{{u_{x_l x_l }  + \varepsilon }},\frac{1}{\varepsilon }, \cdots ,\frac{1}{\varepsilon },0)  \notag \\
&+ \frac{1}{C}( - \frac{{u_{x_1 t} }}{{u_{x_1 x_1 }  + \varepsilon }}, \cdots , - \frac{{u_{x_l t} }}{{u_{x_l x_l }  + \varepsilon }},0, \cdots ,0,1)^T ( - \frac{{u_{x_1 t} }}{{u_{x_1 x_1 }  + \varepsilon }}, \cdots , - \frac{{u_{x_l t} }}{{u_{x_l x_l }  + \varepsilon }},0, \cdots ,0,1)  \notag \\
\ge& diag(\frac{1}{{u_{x_1 x_1 }  + \varepsilon }}, \cdots ,\frac{1}{{u_{x_l x_l }  + \varepsilon }},0, \cdots ,0,0). \notag
\end{align}
So
\begin{align}
 (D_y u_{n + 1i} )\left( {D_y^2 u + \varepsilon I} \right)^{ - 1} (D_y u_{n + 1i} )^T  \ge \sum\limits_{k \in G} {\frac{{u_{n + 1ki} u_{n + 1ki} }}{{u_{x_k x_k }  + \varepsilon }}}.
 \end{align}
Then we have for $i \in G$
\begin{align}
 \sum\limits_{\eta  \in G} {\frac{{u_{n + 1\eta i} u_{\eta n + 1i} }}{{u_{\eta \eta } }}}  \ge& \mathop {\lim }\limits_{\varepsilon  \to 0 + } \sum\limits_{k \in G} {\frac{{u_{n + 1ki} u_{n + 1ki} }}{{u_{x_k x_k }  + \varepsilon }}}  + O(\phi  + |\nabla _x \phi |)  \notag  \\
  =& \sum\limits_{k \in G} {\frac{{u_{n + 1ki} u_{n + 1ki} }}{{u_{x_k x_k } }}}  + O(\phi  + |\nabla _x \phi |)  \notag \\
  =& \sum\limits_{kl \in G} {u_{n + 1ki} u_{n + 1li} u^{kl} }  + O(\phi  + |\nabla _x \phi |).
 \end{align}
Hence, \eqref{4.41} holds.

For the general case, the CLAIM also holds following the above proof.

\section{Discussions}

In fact, there are many equations satisfying the conditions \eqref{1.7}.

\begin{proposition} \label{prop5.1}
(1) All the linear operators satisfy conditions \eqref{1.7}.

(2)the Hessian operators $\sigma_k^{\frac{1}{k}}$ and $(\frac{\sigma_k}{\sigma_l})^{\frac{1}{k-l}}$ ($k>l>0$) satisfy the condition \eqref{1.7} for the convex admissible solutions ( that is $D^2u \geq 0$, and $D^2 u \in \Gamma_k$ on $\Omega \times (0, T]$).

(3) If $g$ is a non-decreasing and convex function and $F_1$, $\cdots$,
$F_m$ satisfy condition \eqref{1.7}, then $F = g(F_1,\cdots, F_m)$
also satisfies condition \eqref{1.7}. In particular,  if $F_1$ and
$F_2$ are in the class, so are $F_1 +F_2$ and $F_1^\alpha$( where
$F_1 > 0$) for any $\alpha \geqslant 1$.
\end{proposition}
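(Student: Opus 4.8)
The plan is to verify condition \eqref{1.7} directly in each case, i.e.\ to show that $(A,u,x,t)\mapsto F(A^{-1},p,u,x,t)$ is convex for each fixed $p$; by Lemma~\ref{lem2.5} this is the same as $Q^*\ge 0$, but it is cleaner to reason with convexity. \emph{Part (1).} I would write a constant-coefficient linear operator as $F(A,p,u,x,t)=\sum_{ij}a^{ij}A_{ij}+\langle b,p\rangle+cu+f$ with $(a^{ij})>0$. For fixed $p$, $F(A^{-1},p,u,x,t)$ equals $\mathrm{tr}(aA^{-1})$ plus terms affine in $(u,x,t)$, so the only point is that $A\mapsto\mathrm{tr}(aA^{-1})$ is convex on $\{A>0\}$ for $a\ge 0$; this is the operator convexity of matrix inversion, and it also comes straight out of \eqref{2.6}: for such an $F$ all second derivatives vanish except $F^{ab}=a^{ab}$, so $Q^*=2\sum_{a,b,c,d}F^{ab}A^{cd}X_{ad}X_{bc}=2\,\mathrm{tr}(aXA^{-1}X)=2\,\|a^{1/2}XA^{-1/2}\|^2\ge 0$. (If the $a^{ij}$ are allowed to depend on $(x,t)$ one must impose enough on the coefficients so that the remaining terms of $Q^*$ stay controlled.)

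\emph{Part (2).} I would use the inversion identity $\sigma_j(A^{-1})=\sigma_{n-j}(A)/\sigma_n(A)$, valid for $A>0$, to rewrite
\[
[\sigma_k(A^{-1})]^{1/k}=\Big(\frac{\sigma_{n-k}(A)}{\sigma_n(A)}\Big)^{1/k}=\frac1{g(A)},\qquad g(A):=\Big(\frac{\sigma_n(A)}{\sigma_{n-k}(A)}\Big)^{1/k}
\]
and similarly $\big(\sigma_k(A^{-1})/\sigma_l(A^{-1})\big)^{1/(k-l)}=\big(\sigma_{n-k}(A)/\sigma_{n-l}(A)\big)^{1/(k-l)}=1/h(A)$ with $h(A):=\big(\sigma_{n-l}(A)/\sigma_{n-k}(A)\big)^{1/(k-l)}$. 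The classical concavity result for elementary symmetric functions says that $(\sigma_a/\sigma_b)^{1/(a-b)}$ is positive and concave on $\{A>0\}$ whenever $0\le b<a\le n$; applied with $(a,b)=(n,n-k)$ it shows $g$ is positive and concave, and with $(a,b)=(n-l,n-k)$ that $h$ is positive and concave. Since $t\mapsto 1/t$ is convex and non-increasing on $(0,\infty)$, the compositions $1/g$ and $1/h$ are convex on $\{A>0\}$. As the operators $\sigma_k^{1/k}$ and $(\sigma_k/\sigma_l)^{1/(k-l)}$ do not involve $(u,x,t)$, this is precisely \eqref{1.7} on the admissible set (on its interior $D^2u>0$, and up to the boundary $D^2u\ge 0$, $D^2u\in\Gamma_k$ by continuity).

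\emph{Part (3).} For each fixed $p$, put $h_i(A,u,x,t):=F_i(A^{-1},p,u,x,t)$, which is convex in $(A,u,x,t)$ by hypothesis. Then I would invoke the composition rule for convex functions: if $g$ is convex and non-decreasing in each argument and every $h_i$ is convex, then $g(h_1,\dots,h_m)$ is convex, because for $z=\lambda x_1+(1-\lambda)x_2$ one has $h_i(z)\le\lambda h_i(x_1)+(1-\lambda)h_i(x_2)$, and then by monotonicity and convexity of $g$, $g(h(z))\le g\big(\lambda h(x_1)+(1-\lambda)h(x_2)\big)\le\lambda g(h(x_1))+(1-\lambda)g(h(x_2))$. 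Hence $F=g(F_1,\dots,F_m)$ satisfies \eqref{1.7}. Taking $g(s_1,s_2)=s_1+s_2$ (affine, hence convex and non-decreasing) gives $F_1+F_2$, and $g(s)=s^{\alpha}$ on $(0,\infty)$, which is non-decreasing and convex for $\alpha\ge 1$, gives $F_1^{\alpha}$ when $F_1>0$. (Ellipticity \eqref{1.5} is inherited too, since $F^{ij}=\sum_k(\partial g/\partial s_k)F_k^{ij}$.)

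The only ingredient beyond bookkeeping is in part (2): one needs the classical concavity of $(\sigma_a/\sigma_b)^{1/(a-b)}$ together with the identity $\sigma_j(A^{-1})=\sigma_{n-j}(A)/\sigma_n(A)$ in order to recognize $F(A^{-1},\cdot)$ as the reciprocal of a positive concave function, after which ``convex non-increasing $\circ$ concave'' gives exactly what \eqref{1.7} asks. Parts (1) and (3) are then short, depending only on the operator convexity of matrix inversion and on the composition rule for convex functions respectively.
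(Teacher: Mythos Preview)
Your proposal is correct and is essentially the approach the paper has in mind: the paper's own ``proof'' is a one-line remark that the result follows from a direct calculation using \eqref{2.7} together with the references \cite{BG09,BG10,CH13}, and you have supplied precisely those calculations. Your use of the inversion identity $\sigma_j(A^{-1})=\sigma_{n-j}(A)/\sigma_n(A)$ together with the concavity of $(\sigma_a/\sigma_b)^{1/(a-b)}$ is the standard route in \cite{BG09,BG10}, and your composition argument for part~(3) is the expected one; your caveat about variable coefficients in part~(1) is also appropriate, since the blanket statement ``all linear operators'' really means the constant-coefficient case unless one imposes further structure on the coefficients.
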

Through a direct calculation and using \eqref{2.7}, we can get the proof. Also we can find the proof of Proposition \ref{prop5.1} easily from \cite{BG09, BG10, CH13}.

\textbf{Acknowledgement}. The author would like to express sincere
gratitude to Prof. Xi-Nan Ma for the constant encouragement and helpful
suggestions in this subject. Also, the author would like to thank Prof. Pengfei Guan for the advice on the choice of coordinates in Dec. 2012.

\end{document}